\theoremstyle{plain}
\newtheorem{theorem}{Theorem}[section]
\newtheorem{corollary}[theorem]{Corollary}
\newtheorem{proposition}[theorem]{Proposition}
\theoremstyle{definition}
\newtheorem{ex}[theorem]{Example}
\theoremstyle{definition}
\newtheorem{remark}[theorem]{Remark}
\newtheorem{question}[theorem]{Question}
\newcommand*\circled[1]{\tikz[baseline=(char.base)]{
            \node[shape=circle,draw,inner sep=1.5pt] (char) {#1};}}
\DeclareMathOperator{\drank}{d}
\newcommand{\Z}{\ensuremath{\mathbf{Z}}}
\newcommand{\F}{\ensuremath{\ff}}
\newcommand{\R}{\ensuremath{\mathbf{R}}}
\newcommand{\Q}{\ensuremath{\mathbf{Q}}}
\newcommand{\C}{\ensuremath{\mathbf{C}}}
\newcommand{\Dih}{\ensuremath{\mathbf{D}}}
\newcommand{\Quat}{\ensuremath{\mathbf{Q}}}
\newcommand{\U}{\ensuremath{\mathbf{U}}}
\newcommand{\A}{\ensuremath{\mathbf{A}}}
\newcommand{\n}{\noindent}
\newcommand{\map}[3]{\ensuremath{#1 : #2 \longrightarrow #3}}
\newcommand{\cyclic}[1]{\ensuremath{\langle #1 \rangle}}
\renewcommand{\F}{\ensuremath{\mathbf{F}}}
\newcommand{\trank}{\ensuremath{\mathrm{d}}}
\newcommand{\mm}[4]{\ensuremath{\left[ \begin{array}{cc} #1 & #2 \\ #3
        & #4\end{array} \right]}}
\begin{document}

% Article information
\title{Fuchs' problem for $p$-groups}
\date{\today}
 
% Author information
\author{Sunil K. Chebolu}
\address{Department of Mathematics \\
Illinois State University \\
Normal, IL 61790, USA}
\email{schebol@ilstu.edu}

\author{Keir Lockridge} 
\address {Department of Mathematics \\
Gettysburg College \\
Gettysburg, PA 17325, USA}
\email{klockrid@gettysburg.edu}

\thanks{The first author is supported by Simons Collaboration Grants for Mathematicians (516354). }

% AMS information
\keywords{Fuchs' problem, $p$-groups, group of units}
\subjclass[2010]{Primary 16U60, 20K01; Secondary 13M05, 11T99}
 
% Abstract
\begin{abstract}
Which groups can be the group of units in a ring? This open question, posed by L\'{a}szl\'{o} Fuchs in 1960, has been studied by the authors and others with a variety of restrictions on either the class of groups or the class of rings under consideration. In the present work, we investigate Fuchs' problem for the class of $p$-groups. Ditor provided a solution in the finite, odd-primary case in 1970. Our first main result is that a finite $2$-group $G$ is the group of units of a ring of odd characteristic if and only if $G$ is of the form $\C_8^t \times \prod_{i = 1}^k \C_{2^{n_i}}^{s_i},$ where $t$ and $s_i$ are non-negative integers and $2^{n_i}+1$ is a Fermat prime for all $i$. We also determine the finite abelian $2$-groups of rank at most 2 that are realizable over the class of rings of characteristic 2, and we give some results concerning the realizability of $2$-groups in characteristic 0 and $2^n$. Finally, we show that the only almost cyclic $2$-groups  which appear as the group of units in a ring are $\C_2, \C_4, \C_8, \C_{q-1}$ ($q$ a Fermat prime),  $\C_2 \times \C_{2^n} (n \ge 1)$, $\Dih_8$, and $\Quat_8$. From this list we obtain the $p$-groups with periodic cohomology which arise as the group of units in a ring.
\end{abstract}
 
\maketitle
\thispagestyle{empty}

%\tableofcontents

\section{Introduction}\label{sec:introduction}

L\'{a}szl\'{o} Fuchs poses the following problem in \cite{fuchsprob}: determine which abelian groups are the group of units in a commutative ring. In \cite{PS}, following \cite{gilmer}, all finite cyclic groups which occur as the group of units of a ring are determined. In \cite{cl-3}, we provide an answer to this question for indecomposable abelian groups. Rather than narrowing the class of abelian groups under consideration, one could also broaden the scope of the problem by considering nonabelian groups and noncommutative rings. Call a group $G$ {\bf realizable} if it is the group of units in some ring, and call $G$ {\bf realizable in characteristic $n$} if it is the group of units in some ring of characteristic $n$. In \cite{do1} and \cite{do2} the authors determine which alternating, symmetric, and finite simple groups are realizable.  In \cite{cl-4} we determine the dihedral groups which are realizable. In \cite{dd}, finite groups of units in integral domains and torsion free rings are classified, and partial information is obtained about rings of characteristic zero.

In the present paper, we consider Fuchs' problem for $p$-groups (where $p$ is a prime). Ditor proved in \cite{ditor} that a group of odd order is realizable as the group of units in a ring if and only if $G$ is a direct product of cyclic groups whose orders are one less than a power of 2. Consequently, for odd primes $p$, a finite $p$-group is realizable if and only if it is an elementary abelian $p$-group and $p$ is a Mersenne prime. In \cite{monthly} we give an elementary proof of this fact over commutative rings.

Turning our attention to $2$-groups, where the problem is open and considerably more difficult,  in \S \ref{oddcharsec} we determine the possible characteristics of a ring whose group of units is a $2$-group and then solve Fuchs' problem for finite $2$-groups over rings of odd characteristic.

\begin{theorem}\label{oddchar}
Let $n$ be a positive integer. The following are equivalent:
\begin{enumerate}
\item There exists a $2$-group that is realizable in characteristic $n$. (There is a ring $R$ of characteristic $n$ such that $R^\times$ is a $2$-group.)
\item The integer $n$ is of the form $2^ap_1p_2 \cdots p_k,$ where the $p_i$'s are distinct Fermat primes, $a \ge 0$ and  $k \ge 0$. \label{nform}
\item It is possible to construct a regular $n$-gon with straightedge and compass. \label{gonitem}
\end{enumerate}
Moreover, the realizable finite $2$-groups over rings of odd characteristic are exactly the groups of the form 
\[ \C_8^t \times \prod_{i = 1}^k \C_{2^{n_i}}^{s_i}, \]
where $t$ and $s_i$ are non-negative integers and $2^{n_i}+1$ is a Fermat prime for all $i$.
\end{theorem}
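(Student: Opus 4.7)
The equivalence $(2) \Leftrightarrow (3)$ is the classical Gauss--Wantzel theorem characterizing the constructible regular $n$-gons, which I would invoke directly. For the remaining content, the strategy is a characteristic decomposition followed by a Wedderburn--Artin analysis in the case of odd prime characteristic.

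I would begin with a Chinese Remainder reduction: if $\ch(R) = n = 2^a p_1^{a_1} \cdots p_k^{a_k}$, the orthogonal idempotents in $\Z/n$ lift to central orthogonal idempotents in $R$, yielding a product decomposition $R \cong R_{2^a} \times \prod R_{p_i^{a_i}}$ of rings of the indicated characteristics with $R^\times$ factoring accordingly. I would then rule out higher prime-power characteristics on the odd part: if $\ch(R) = p^a$ with $p$ odd and $a \ge 2$, then for any $x \in R$ the element $1 + px$ is a unit (with inverse $\sum_{j=0}^{a-1}(-px)^j$), and a binomial-coefficient computation shows $(1 + px)^{p^{a-1}} = 1$ in $R$. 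Hence the subgroup $1 + pR$ of $R^\times$ is a $p$-group; since $R^\times$ is a $2$-group and $p$ is odd, $1 + pR = \{1\}$, so $pR = 0$, contradicting $\ch(R) = p^a$. Thus the odd part of $n$ is squarefree.

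For $\ch(R) = p$ odd, I would reduce to the case of finite $R$. Setting $G = R^\times$ and letting $T$ be the image of the evaluation map $\F_p[G] \to R$, the subring $T$ is finite (of cardinality at most $p^{|G|}$), contains $G$, and satisfies $T^\times = G$ (since $u \in G \subseteq T$ and $u^{-1} \in G \subseteq T$). So one may assume $R$ is finite. Then $R$ is Artinian, $J(R)$ is nilpotent, and the identity $(1+x)^{p^k} = 1 + x^{p^k}$ in characteristic $p$ shows that $1 + J(R) \subseteq R^\times$ is a $p$-group, forcing $J(R) = 0$. By Wedderburn, $R \cong \prod M_{n_i}(\F_{q_i})$, and the odd factor $q_i^{n_i(n_i - 1)/2}$ in $|\GL_{n_i}(\F_{q_i})|$ forces $n_i = 1$. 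Thus $R \cong \prod \F_{q_i}$ with each $q_i - 1$ a power of $2$.

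The final step classifies $q = p^r$ (odd prime $p$) with $q - 1$ a power of $2$: for $r = 1$, $p$ is a Fermat prime; for $r \ge 3$ odd, $(p^r - 1)/(p - 1) = 1 + p + \cdots + p^{r-1}$ is a sum of $r$ odd terms, hence odd and greater than $1$, a contradiction; for $r$ even, factoring $p^r - 1 = (p^{r/2} - 1)(p^{r/2} + 1)$ into two powers of $2$ differing by $2$ forces $p^{r/2} = 3$, giving $(p, r) = (3, 2)$ and $q = 9$. So each $\F_{q_i}^\times$ is either $\C_{2^{n_j}}$ (from a Fermat prime $2^{n_j} + 1$) or $\C_8$ (from $\F_9$), yielding the claimed classification. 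Conversely, $\Z/2^a \times \prod \F_{p_i}$ realizes any characteristic of the form $n = 2^a p_1 \cdots p_k$, and $\F_9^t \times \prod \F_{p_j}^{s_j}$ realizes every group of the stated form. The main obstacle is the reduction to finite $R$, handled cleanly by the group-algebra construction above; the elementary factoring in the last step also avoids any need to invoke Mihailescu's theorem.
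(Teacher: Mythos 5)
Your proof is correct and follows essentially the same route as the paper's: Gauss--Wantzel for the equivalence of the last two conditions, the prime-subring/CRT analysis to pin down $n$, and, for the classification in odd prime characteristic, reduction to a finite ring, triviality of the Jacobson radical, Artin--Wedderburn plus Wedderburn's little theorem to get $R \cong \prod \F_{q_i}$, and the elementary factorization of $p^r - 1 = 2^m$ (which the paper cites from \cite{cl-2} and which indeed avoids Mih\u{a}ilescu); your minor deviations --- nilpotency of $J$ rather than an order count, the odd factor of $|\GL_n(\F_q)|$ rather than an explicit unipotent of order $p$ --- are cosmetic. The one point to tighten is that statement (A) permits $R^\times$ to be an \emph{infinite} $2$-group, where your reduction to a finite ring is unavailable; to conclude in that case that each odd prime $p$ dividing $n$ is Fermat, simply observe (as the paper does) that $(\Z/p)^\times \cong \C_{p-1}$ is a subgroup of the $2$-group $R^\times$, so $p-1$ is a power of $2$.
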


\n The equivalence of (\ref{gonitem}) and (\ref{nform}) is the Gauss-Wantzel Theorem. The group $\C_n$ above denotes the (multiplicative) cyclic group of order $n$.

For finite $2$-groups, it remains to consider rings of characteristic  $2^n$ and $0$. In \S \ref{char2}, we consider Fuchs' problem for finite abelian $2$-groups and rings of characteristic 2. We provide several general results that allow us to prove that certain families of finite abelian $2$-groups are not realizable in characteristic 2. For a finite abelian group $G$, let $\drank(G)$ denote the number of factors in the canonical decomposition of $G$ as a product of cyclic groups of prime power order. We will refer to this quantity as the {\bf rank} of $G$. The following theorem follows from our work in \S \ref{char2}.

\begin{theorem} \label{rank1-2}
For any positive integer $n$, there are only finitely many finite abelian $2$-groups of rank $n$ that are realizable in characteristic $2$. The finite abelian $2$-groups of rank at most $2$ that are realizable in characteristic $2$ are $\C_8 \times \C_2$, $\C_4 \times \C_4$, $\C_4 \times \C_2, \C_2 \times \C_2, \C_4$, $\C_2$ and $\C_1$.
\end{theorem}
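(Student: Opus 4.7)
The plan is to reduce to finite commutative local $\F_2$-algebras with residue field $\F_2$, analyze each local factor via the Frobenius (squaring) map, and then enumerate the rank $\leq 2$ case. Let $R$ be a ring of characteristic $2$ with $R^\times$ a finite abelian $2$-group of rank $n$. For each unit $u$ of order $2^k$, the commutative subring $\F_2[u]$ gives $(u-1)^{2^k} = u^{2^k} - 1 = 0$, so $u - 1$ is nilpotent. Since $R^\times$ is abelian, the subring $\F_2[R^\times]$ is commutative and generated by finitely many commuting nilpotents, hence finite; its unit group is still $R^\times$. We may therefore assume $R$ is a finite commutative $\F_2$-algebra. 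Decomposing $R = \prod_{i=1}^s R_i$ into local factors, the requirement that $(R_i/\mathfrak{m}_i)^\times \cong \F_{2^{d_i}}^\times$ be a $2$-group forces $d_i = 1$, so $R_i^\times = 1+\mathfrak{m}_i$.

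In characteristic $2$ the Frobenius $m \mapsto m^2$ on $\mathfrak{m}_i$ is additive, so $\mathfrak{m}_i^{[2]} := \{m^2 : m \in \mathfrak{m}_i\}$ is an $\F_2$-subspace, $(1+\mathfrak{m}_i)^2 = 1+\mathfrak{m}_i^{[2]}$, and the rank of $1+\mathfrak{m}_i$ equals $d_i := \dim_{\F_2}\mathfrak{m}_i/\mathfrak{m}_i^{[2]}$. Since $\sum d_i = n$, at most $n$ of the $R_i$ differ from $\F_2$. For the finiteness assertion, let $e_i$ be the least integer with $\mathfrak{m}_i^{[2^{e_i}]} = 0$, so $\exp(1+\mathfrak{m}_i) = 2^{e_i}$. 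The existence of $m \in \mathfrak{m}_i$ with $m^{2^{e_i-1}} \ne 0$ forces $m, m^2, \ldots, m^{2^{e_i-1}}$ to be $\F_2$-linearly independent (by the usual nilpotent argument, killing the leading term and noting $1 + $ nilpotent is a unit), so $\dim_{\F_2}\mathfrak{m}_i \geq 2^{e_i-1}$; meanwhile $|1+\mathfrak{m}_i| \leq (2^{e_i})^{d_i}$ yields $\dim_{\F_2}\mathfrak{m}_i \leq d_i e_i \leq n e_i$. The inequality $2^{e_i-1} \leq n e_i$ bounds $e_i$, hence $|R^\times|$, in terms of $n$, proving finiteness.

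For the explicit rank $\leq 2$ classification: a rank-$1$ local piece has $\dim\mathfrak{m}_i/\mathfrak{m}_i^2 = 1$ (as $\mathfrak{m}_i^{[2]} \subseteq \mathfrak{m}_i^2$), so $\mathfrak{m}_i$ is principal and $R_i \cong \F_2[x]/(x^N)$; the unit group has order $2^{N-1}$ while $1+x$ has order $2^{\lceil\log_2 N\rceil}$, so it is cyclic iff $N \in \{1,2,3\}$, yielding $\C_1,\C_2,\C_4$. For total rank $2$, either $R^\times$ is a product of two rank-$1$ pieces (giving $\C_2 \times \C_2$, $\C_4 \times \C_2$, or $\C_4 \times \C_4$), or it arises from a single rank-$2$ local factor. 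The latter splits by $\dim\mathfrak{m}/\mathfrak{m}^2 \in \{1,2\}$: the value $1$ again gives $R = \F_2[x]/(x^N)$, now with $N \in \{4,5\}$ (since $\dim\mathfrak{m}/\mathfrak{m}^{[2]} = \lceil(N-1)/2\rceil$), yielding $\C_4 \times \C_2$ or $\C_8 \times \C_2$. The value $2$ forces $\mathfrak{m}^2 = \mathfrak{m}^{[2]}$ by dimension comparison, and the main obstacle is a finite case analysis of two-generated local $\F_2$-algebras with this rigidity: using the exponent bound from the inequality above (which gives $e \leq 4$ when $d=2$), together with change of generators in $\mathfrak{m}/\mathfrak{m}^2$ to normalize the relation $xy \in \mathfrak{m}^{[2]}$, one shows the unit group already lies in the list. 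The realizing rings are $\F_2$, $\F_2[x]/(x^k)$ for $k \in \{2,3,4,5\}$, $\F_2[x,y]/(x^2,y^2,xy)$, and $\F_2[x,y]/(xy,x^3,y^3)$, covering every group in the stated list.
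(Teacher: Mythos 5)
Your reduction to finite commutative local $\F_2$-algebras with residue field $\F_2$, the identification of the rank of $1+\mathfrak{m}$ with $\dim_{\F_2}\mathfrak{m}/\mathfrak{m}^{[2]}$ (via $|G/G^2|$), and the finiteness argument from the linear independence of $m, m^2, \dots, m^{2^{e-1}}$ are all correct; that last step is essentially Theorem \ref{rankprop} recast in local-ring language, so the finiteness assertion is established. One caveat: your inequality $2^{e-1}\le ne$ only bounds the \emph{order} of the local unit group, not its rank. The sharper statement, which the paper uses, is that $\F_2[m]\cong \F_2[x]/(x^N)$ with $N\ge 2^{e-1}+1$ already contributes a subgroup $\A_N$ of rank $\lfloor N/2\rfloor\ge 2^{e-2}$, whence $2^{e-2}\le n$ and $e\le 3$ when $n=2$. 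With your weaker bound the case $e=4$, $d=2$ survives, so $\C_{16}\times\C_{16}$ is an additional group you would have to exclude by hand.

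The genuine gap is the final case: a single local factor with $\dim\mathfrak{m}/\mathfrak{m}^2=2$ and $\mathfrak{m}^2=\mathfrak{m}^{[2]}$. Here you must rule out $\C_8\times\C_4$ and $\C_8\times\C_8$ (and $\C_{16}\times\C_{16}$, per the above), and your proposal simply asserts that ``a finite case analysis \dots shows the unit group already lies in the list.'' That assertion is the entire difficulty of the theorem: these groups are consistent with every numerical invariant you have set up ($d=2$, $e=3$, $\dim_{\F_2}\mathfrak{m}=5$ or $6$ all satisfy your inequalities), so a structural argument is unavoidable. The paper's Proposition \ref{c48} supplies it: from a surjection $\F_2[x,y]/(x^5,y^4)\to R$ with $x^3=y^2$ forced in $R$, one analyzes the four possible images of the order-at-most-$2$ unit $1+xy$ and derives $x^4=0$ in every case, contradicting the existence of a unit of order $8$. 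Your plan to ``normalize the relation $xy\in\mathfrak{m}^{[2]}$ by change of generators'' is the right opening move --- writing $xy=ax^2+by^2+w$ with $w\in\mathfrak{m}^4$ is where that case analysis effectively begins --- but until it is carried out, the rank-$2$ classification is not proved. Everything else (realizability of the seven listed groups, the rank-$1$ case, and the splitting into local factors, which is a cleaner organizing device than the paper's direct use of group-algebra quotients) is fine.
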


In \S \ref{char2n}, we give a brief account of what we know about the realizability of finite abelian $2$-groups in characteristic $2^n$ when $n > 1$. 

In \S \ref{char0sec}, we study Fuchs' problem for $2$-groups in characteristic 0.  The first examples of realizable $2$-groups in characteristic $0$ that come to mind are $\C_2 = \Z^\times$ and $\C_4 = \Z[i]^\times$. It turns out that these two groups are also important in the general case. If $B$ is a finite abelian $2$-group, write $B_2 = \{x^2 \, | \, x \in B\}$.

\begin{theorem}\label{char0main} If $G$ is a finite $2$-group that is realizable in characteristic $0$, then either $\C_2$ or $\C_4$ must be a summand of $G$. Further, the finite abelian $2$-groups of the form \[ \C_2 \times A \text{\,\,\, and\,\,\,\, } \C_4 \times A \times A \times B \times B_2\] are realizable in characteristic zero for any finite abelian $2$-groups $A$ and $B$ (either group may be trivial).
\end{theorem}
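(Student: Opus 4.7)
The plan is to separate the necessity and sufficiency halves of the theorem, focusing throughout on the abelian case (since the sufficient constructions below produce only abelian unit groups).

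For the necessity direction, the starting point is that $\mathrm{char}(R) = 0$ forces $\Z \hookrightarrow R$, so $-1 \in R^\times = G$ is a central element of order two. I would introduce $k := \min\{j \ge 1 \mid -1 \notin G^{2^j}\}$ and fix $y \in G$ with $y^{2^{k-1}} = -1$, so that $y$ has order exactly $2^k$. The heart of the argument is to show $k \le 2$ via a cyclotomic/Dirichlet argument: if $k \ge 3$, then $y$ satisfies $y^{2^{k-1}} + 1 = \Phi_{2^k}(y) = 0$, so $\Z[y] \subseteq R$ is a quotient of the Dedekind domain $\Z[\zeta_{2^k}] = \Z[T]/\Phi_{2^k}(T)$. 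Since $R$ has characteristic $0$, $\Z[y]$ is infinite, while every proper quotient of $\Z[\zeta_{2^k}]$ is finite; hence the surjection is an isomorphism $\Z[y] \cong \Z[\zeta_{2^k}]$. But Dirichlet's unit theorem gives $\mathrm{rank}\, \Z[\zeta_{2^k}]^\times = 2^{k-2} - 1 \ge 1$, making $\Z[y]^\times$ infinite and contradicting $\Z[y]^\times \subseteq G$. Hence $k \in \{1, 2\}$. Next, $y$ cannot be a square in $G$ (otherwise $-1$ would be a $2^k$-th power), so $y$ has height zero in $G$, whence $\langle y \rangle \cong \C_{2^k}$ is a pure cyclic subgroup of the finite abelian $2$-group $G$ and therefore a direct summand.

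For the sufficient direction I would use \emph{trivial (square-zero) extensions}. Recall that for a commutative ring $S$ and an $S$-module $N$, the trivial extension $S \oplus N$ with multiplication $(s, n)(s', n') = (ss', sn' + s'n)$ is a commutative ring of the same characteristic as $S$, and its unit group is canonically isomorphic to $S^\times \times N$ via the map $(s, n) \mapsto (s, s^{-1} n)$. To realize $\C_2 \times A$, take $S = \Z$ and $N = A$: then $\Z \oplus A$ has characteristic $0$ and unit group $\C_2 \times A$. To realize $\C_4 \times A \times A \times B \times B_2$, take $S = \Z[i]$ and let $N$ be the $\Z[i]$-module whose underlying abelian group is $(A \oplus A) \oplus (B \oplus B_2)$, with action defined as follows: on $A \oplus A$, let $i$ act by $(a_1, a_2) \mapsto (-a_2, a_1)$, which squares to $-\mathrm{id}$; on the canonical decomposition $B \oplus B_2 = \prod_j (\C_{2^{n_j}} \oplus \C_{2^{n_j - 1}})$, let $i$ act on each summand $\C_{2^n} \oplus \C_{2^{n-1}}$ by $\phi(a, b) = (a + 2b,\ -a - b)$, with the first coordinate computed in $\Z/2^n$ and the second in $\Z/2^{n-1}$. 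After verifying $\phi^2 = -\mathrm{id}$, the trivial extension $\Z[i] \oplus N$ has characteristic $0$ and unit group $\C_4 \times A \times A \times B \times B_2$.

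The step that requires the most care is the construction of $\phi$ on $\C_{2^n} \oplus \C_{2^{n-1}}$: the off-diagonal entries live in distinct hom-groups between cyclic groups of different orders, so the $2$ in the upper-right position must be read as the inclusion $\Z/2^{n-1} \hookrightarrow \Z/2^n$ given by $b \mapsto 2b$, while the $-1$ in the lower-left encodes the reduction $\Z/2^n \twoheadrightarrow \Z/2^{n-1}$ followed by negation. One must confirm both that $\phi$ is a well-defined abelian group endomorphism and that $\phi^2$ agrees with $-\mathrm{id}$ on the nose. Once this module-theoretic step is handled, the rest is routine: the necessity side is the cyclotomic/Dirichlet package plus the standard height-zero extraction of a cyclic summand, and the sufficiency side reduces to the usual trivial-extension unit-group calculation.
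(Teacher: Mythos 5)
Your proof is correct in substance on both halves, but one step in the necessity argument is justified by a false inference and needs a one-line repair. On sufficiency you are doing exactly what the paper does: the trivial extension $S \oplus N$ is the ring $M(S,N)$ of Proposition \ref{matrixtrick}, applied to $S=\Z$ and $S=\Z[i]$. The only real difference is that the paper obtains the admissible modules $N$ abstractly -- a finite $\Z[i]$-module that is a $2$-group is a sum of quotients $\Z[i]/(1+i)^k$, which as abelian groups are $\C_{2^m}\times\C_{2^m}$ or $\C_{2^{m+1}}\times\C_{2^m}$ -- whereas you build the $i$-action by hand; your $\phi(a,b)=(a+2b,\,-a-b)$ on $\C_{2^n}\oplus\C_{2^{n-1}}$ is well defined and squares to $-\mathrm{id}$, so this works, though it recomputes what the PID structure theory gives for free. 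On necessity your route is genuinely different: the paper shows that $z^4=-1$ forces positive characteristic by exhibiting the unit $z^2+z+1$ (inverse $1-z^2+z^3$), which must have $2$-power order, and checking that the ideal $(x^4+1,(x^2+x+1)^{2^k}-1)\subseteq\Z[x]$ contains a nonzero integer since its generators are coprime in $\Q[x]$. You instead note that $\Z[y]$ must be all of $\Z[\zeta_{2^k}]$ (proper quotients of an order are finite) and invoke Dirichlet's unit theorem. Both are valid; the paper's is more elementary, while yours explains conceptually why the cutoff is $k\le 2$ (exactly the cyclotomic fields of unit rank zero). The paper also sidesteps your final summand-extraction step entirely by arguing contrapositively: if every invariant factor of $G$ is at least $8$, then every involution, in particular $-1$, is a fourth power.

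The gap is the sentence ``$y$ has height zero in $G$, whence $\langle y\rangle$ is a pure cyclic subgroup \dots and therefore a direct summand.'' Height zero of a generator does not imply purity of the cyclic subgroup it generates. For example, in $G=\C_8\times\C_2=\langle g\rangle\times\langle h\rangle$ the element $y=g^2h$ has order $4$ and height $0$ (it is not a square), yet $\langle y\rangle$ is not pure ($y^2=g^4\in G^4$ while $\langle y\rangle^4$ is trivial) and is not a direct summand (otherwise $G\cong\C_4\times H$ with $|H|=4$, contradicting $G\cong\C_8\times\C_2$). The correct criterion is that the socle element $y^{2^{k-1}}$ must have height exactly $k-1$ in $G$. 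Fortunately your choice of $k$ supplies this: if $y^{2^j}=w^{2^{j+1}}$ for some $j<k$, then $-1=y^{2^{k-1}}=w^{2^k}\in G^{2^k}$, contradicting the minimality of $k$; hence $\langle y\rangle$ is pure, and a finite pure subgroup of a finite abelian group is a direct summand. So the fix uses the full strength of ``$-1\notin G^{2^k}$'' rather than merely ``$y\notin G^2$.'' (Your tacit restriction to abelian $G$ in this step is also the right reading of the theorem, since for nonabelian groups the summand statement would have to be interpreted differently.)
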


In \S \ref{nonabelian}, we consider nonabelian $2$-groups. We show that if a nonabelian $2$-group is realizable, then its center and all of its cyclic maximal abelian subgroups must be isomorphic to $\C_2$ or $\C_4$. We apply this result to solve Fuchs' problem over arbitrary rings for the classes of almost cyclic $2$-groups and groups with periodic cohomology. We will say that a $p$-group $G$ is {\bf almost cyclic} if $G$ has a cyclic subgroup of index $p$. The almost cyclic $2$-groups have been classified (see \cite[12.5.1]{hall}); every such group falls into one of five families (abelian, dihedral, quaternion, semidihedral, and modular). It is well known that a  $p$-group $G$ has periodic mod $p$ cohomology if and only if it is a cyclic $p$-group or a generalized quaternion $2$-group. Thus, groups with periodic cohomology are contained within the class of almost cyclic $p$-groups.  Building upon our earlier work in \cite{cl-3} and \cite{cl-4}, we prove:

\begin{theorem}\label{maintheorem}
Let $p$ be a prime and let $G$ be a realizable finite $p$-group.
\begin{enumerate}
 \item If $G$ is almost cyclic, then $G$ is either  $\C_2, \C_4, \C_8, \C_{q-1}$ ($q$ a Fermat prime),  $\C_2 \times \C_{2^n}$  $(n \ge 1)$,  $\C_p$ ($p$ Mersenne), $\C_p \times \C_p$ ($p$ Mersenne), $\Dih_8$, or $\Quat_8$. \label{maintheorem-ac}
 \item  If $G$ has periodic cohomology, then $G$ is either $\C_2, \C_4, \C_8, \C_p$ ($p$ Mersenne), $\C_{q-1}$ ($q$ Fermat) or $\Quat_8$. \label{maintheorem-pc}
 \end{enumerate}
\end{theorem}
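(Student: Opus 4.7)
The plan is to split on whether $p$ is odd or $p=2$, and for $p=2$ to further split on abelian versus nonabelian. The key inputs are Ditor's theorem (for odd primes), the structural constraint from \S\ref{nonabelian} (the center and every cyclic maximal abelian subgroup of a realizable nonabelian $2$-group is isomorphic to $\C_2$ or $\C_4$), and the characteristic-by-characteristic realizability theorems proved earlier in the paper.

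For $p$ odd, Ditor's theorem as recalled in the introduction forces any realizable finite $p$-group to be elementary abelian with $p$ a Mersenne prime. An almost cyclic $p$-group has a cyclic subgroup of index $p$, so among elementary abelian $p$-groups only $\C_p$ and $\C_p \times \C_p$ are possible; for the periodic cohomology version only the cyclic ones of prime order remain, i.e.\ $\C_p$. This settles the odd-primary contribution to both parts.

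The heart of the argument is $p=2$. I would first dispose of the nonabelian almost cyclic $2$-groups using the classification \cite[12.5.1]{hall} into dihedral $\Dih_{2^n}$, generalized quaternion $\Quat_{2^n}$, semidihedral $\SDih_{2^n}$, and modular $M_{2^n}$ families. In each the distinguished cyclic subgroup of index $2$ is maximal abelian and equal to $\C_{2^{n-1}}$, so the \S\ref{nonabelian} constraint forces $n-1 \le 2$. This rules out all semidihedral groups (the family begins at $n=4$), all modular groups of order $\ge 16$, and all dihedral/quaternion groups beyond $\Dih_8$ and $\Quat_8$; the latter two are known to be realizable from \cite{cl-4}. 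For the abelian almost cyclic $2$-groups, which are $\C_{2^n}$ and $\C_2 \times \C_{2^{n-1}}$, the family $\C_2 \times \C_{2^n}$ ($n\ge 1$) is realized uniformly by $(\Z/2^{n+2})^\times$, and the cyclic case is handled by running through characteristics: Theorem \ref{oddchar} admits $\C_{2^k}$ in odd characteristic precisely when $k \le 3$ or $2^k + 1$ is a Fermat prime; Theorem \ref{char0main} excludes characteristic $0$ for $k \ge 3$ (a cyclic group of order $\ge 8$ has no $\C_2$ or $\C_4$ summand); Theorem \ref{rank1-2} excludes characteristic $2$ for $k \ge 3$; and the results of \S\ref{char2n} dispose of characteristic $2^n$ with $n \ge 2$. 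This leaves exactly $\C_2, \C_4, \C_8,$ and $\C_{q-1}$ for $q$ Fermat.

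Part (2) then follows immediately, since a $p$-group with periodic mod $p$ cohomology is either cyclic or generalized quaternion: the cyclic sublist is $\C_2, \C_4, \C_8, \C_p$ (Mersenne), $\C_{q-1}$ (Fermat), and among generalized quaternion groups only $\Quat_8$ survives. The main obstacle I expect is marshalling the various characteristic-specific obstructions to pin down the cyclic $2$-group list; the nonabelian part, by contrast, is a short consequence of the structural result from \S\ref{nonabelian}.
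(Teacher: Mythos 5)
Your proposal follows the paper's argument almost exactly: Ditor's theorem for odd $p$, Proposition \ref{main-nonabelian} applied to the index-$2$ cyclic subgroup (which is indeed maximal abelian, being a maximal subgroup of a nonabelian group) to kill every nonabelian almost cyclic $2$-group of order at least $16$, explicit realizations of $\Dih_8$, $\Quat_8$, and $\C_2\times\C_{2^n}$, and the known list for cyclic $2$-groups. The one flawed step is your claim that ``the results of \S\ref{char2n} dispose of characteristic $2^n$ with $n\ge 2$'' for cyclic $2$-groups: that section contains only positive (construction) results and proves no non-realizability statement, so it cannot exclude, say, $\C_8$ in characteristic $4$ or $8$. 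The exclusion you need is supplied by Corollary 2.4 (equivalently, the result of \cite{cl-3} quoted as Theorem \ref{abelian-1}), which is exactly what the paper invokes for the cyclic case instead of re-running the characteristic-by-characteristic analysis; with that substitution your argument is complete.
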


\n The groups $\Dih_8$ and $\Q_8$ denote the dihedral group of order 8 and the quaternion group of order 8, respectively. 

We wish to thank the anonymous referee for bringing \cite{ddx} to our attention; this prompted our inclusion of \S \ref{char2n}.

\section{Preliminaries}

Let $p$ be a prime. We now collect a few results for later use. The next proposition will allow us to restrict our attention to finite rings when considering the realizability of finite $2$-groups, and we will be able to further restrict our attention to finite commutative rings when considering the realizability of finite abelian $2$-groups.

\begin{proposition}[\cite{cl-4}] \label{wlogRfinite}
If a finite group $G$ is realizable in characteristic $n > 0$, then there exists a finite ring $R$ of characteristic $n$ whose group of units is $G$. If $G$ is also abelian, then the ring $R$ may be taken to be a commutative ring.
\end{proposition}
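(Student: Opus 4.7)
The plan is to produce the required finite ring $S$ as a subring of $R$, namely the unital subring generated by the group of units itself. Let $R$ be a ring of characteristic $n > 0$ with $R^\times = G$ finite, and let $S \subseteq R$ denote the subring generated by $G$ together with $1$. Because $G$ is closed under multiplication and contains $1$, every product of elements of $G$ is again in $G$; therefore $S$ is spanned, as a $\Z/n\Z$-module, by the finite set $G$. This immediately gives $|S| \le n^{|G|}$, so $S$ is a finite ring.

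Next I would check that $S$ has the right characteristic and the right unit group. Since $S$ contains $1$ and sits inside $R$, the additive order of $1$ in $S$ equals the additive order of $1$ in $R$, namely $n$, so $\operatorname{char}(S) = n$. For the unit group, the inclusion $G \subseteq S^\times$ is automatic because every element of $G$ lies in $S$ and has an inverse in $S$ (namely its inverse in $G \subseteq S$). Conversely, any $u \in S^\times$ is in particular invertible in $R$, so $u \in R^\times = G$. Hence $S^\times = G$, which handles the first assertion.

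For the abelian refinement, observe that when $G$ is abelian the generators of $S$ pairwise commute, so their $\Z$-linear combinations do as well, making $S$ commutative. Thus $S$ is automatically a finite commutative ring of characteristic $n$ with $S^\times = G$, completing the proof.

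The argument is short and essentially formal; the only thing to be careful about is the first reduction, that $S$ is finitely generated as a $\Z/n\Z$-module. This would be the sole potential obstacle, but it follows at once from the observation that the multiplicative closure of $G \cup \{1\}$ is just $G$ itself, so no higher-degree monomials need to be considered. No hypothesis on whether $R$ is commutative or whether $R$ contains a copy of $\Z/n\Z$ beyond the multiples of $1$ is needed.
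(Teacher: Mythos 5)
Your argument is correct: the subring $S$ generated by $G$ over the prime subring $\Z_n\cdot 1$ is spanned additively by $G$ (hence finite of the same characteristic), and $S^\times = G$ because units of $S$ are units of $R$ while inverses of elements of $G$ already lie in $S$. This is essentially the same ``subring generated by the units'' argument that the paper cites from \cite{cl-4} and reuses in its proofs of Propositions \ref{maximal} and \ref{center}, so nothing further is needed.
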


The following propositions will enable us to make use of what is already known about the realizability of cyclic $p$-groups.

\begin{proposition} \label{maximal}
If a group $G$ is  realizable in characteristic $n$, then so are all of its maximal abelian subgroups (in characteristic $n$).
\end{proposition}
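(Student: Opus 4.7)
The plan is to realize the maximal abelian subgroup $H$ of $G = R^\times$ by restricting to an appropriately chosen subring of $R$. Specifically, assume $R$ is a ring of characteristic $n$ with $R^\times = G$, and let $H$ be a maximal abelian subgroup of $G$. Define $S$ to be the subring of $R$ generated by $H$. The proof then breaks into three easy observations about $S$: its characteristic, its commutativity, and the identification $S^\times = H$.

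First, since $1 \in H$, the subring $S$ contains the prime subring of $R$, so the additive order of $1$ in $S$ equals that in $R$; hence $\ch S = n$. Second, $S$ is commutative, because it is generated as a ring by the pairwise commuting set $H$ (together with $\pm 1$, which is central). Third, the inclusion $H \subseteq S^\times$ is immediate: for every $h \in H$ we have $h^{-1} \in H \subseteq S$, so $h$ is a unit in $S$.

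The key step is the reverse inclusion $S^\times \subseteq H$, and this is exactly where maximality of $H$ enters. Given $u \in S^\times$, $u$ is also a unit in $R$, so $u \in G$. Because $S$ is commutative and $H \cup \{u\} \subseteq S$, the element $u$ commutes with every element of $H$. Consequently, $\langle H, u \rangle$ is an abelian subgroup of $G$ containing $H$, and by the maximality of $H$ we conclude $u \in H$. Thus $S^\times = H$, exhibiting $H$ as realizable in characteristic $n$.

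The only place that requires any caution is verifying that $\ch S = \ch R$ — one must confirm that passing to a subring containing $1$ does not alter the characteristic — but this is automatic. So there is no real obstacle; the entire argument is driven by the maximality hypothesis on $H$.
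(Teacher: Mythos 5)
Your proof is correct and follows essentially the same route as the paper's: form the subring $S$ generated by $H$ (which automatically contains the prime subring, so the characteristic is preserved), note that $S$ is commutative so $S^\times$ is an abelian subgroup of $G$ containing $H$, and invoke maximality to conclude $S^\times = H$. The only difference is that you spell out the individual verifications in more detail than the paper does.
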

\begin{proof}
Suppose $R$ is a ring of characteristic $n$, and let $H$ be a maximal abelian subgroup of $G = R^\times$. Consider the subring $S$ of $R$ that is generated by the elements of $H$ over $\Z_n \subseteq R$. The ring $S$ is commutative with characteristic $n$, and $S^\times$ is an abelian subgroup of $R^\times = G$ which contains $H$. Since $H$ is a maximal abelian subgroup of $G$, we conclude that $S^\times = H$.
\end{proof}

\begin{proposition} \label{center}
If a group $G$ is  realizable in characteristic $n$, then so is its center (in characteristic $n$).
\end{proposition}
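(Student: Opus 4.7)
The plan is to mimic the proof of Proposition \ref{maximal} with the maximal abelian subgroup $H$ replaced by the center $Z(G)$. Given a ring $R$ of characteristic $n$ with $R^\times = G$, let $S \subseteq R$ be the subring generated by $Z(G)$ together with the image of $\Z_n$ in $R$. Then $S$ automatically has characteristic $n$, and it is commutative since $Z(G)$ is abelian. It remains only to verify that $S^\times = Z(G)$.

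The inclusion $Z(G) \subseteq S^\times$ is immediate: any $z \in Z(G) \subseteq R^\times$ is a unit of $R$ whose inverse also lies in the subgroup $Z(G) \subseteq S$, so both $z$ and $z^{-1}$ belong to $S$. For the reverse inclusion, the key observation is that every element of $S$ is a $\Z_n$-polynomial expression in elements of $Z(G)$, and such an expression commutes with every element of $G$. Hence any $u \in S^\times$ is in particular a unit of $R$ (so $u \in G$) that centralizes all of $G$, which says exactly that $u \in Z(G)$.

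I do not anticipate any substantive obstacle here: the argument is entirely parallel to Proposition \ref{maximal}, with ``maximality of $H$'' replaced by the simpler fact that products and sums of central elements with scalars from $\Z_n$ remain central in $G$.
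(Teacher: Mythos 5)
Your proof is correct and follows essentially the same route as the paper's: form the subring $S$ generated by $Z(G)$ over $\Z_n$, note $Z(G) \subseteq S^\times$, and observe that every element of $S$ is a polynomial in central elements and hence commutes with all of $G$, so $S^\times \subseteq Z(G)$. No gaps.
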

\begin{proof}
Suppose $R$ is a ring of characteristic $n$, and let $Z$ be the center of $G = R^\times$. Consider the subring $S$ of $R$ that is generated by the elements of $Z$ over $\Z_n \subseteq R$.  It is clear that $Z \subseteq S^\times$. For the other inclusion, note that any element of $S^\times$ is a sum of products of elements of $Z$. Such an element clearly commutes with every element of $G$, so it must be central. This proves that $S^\times \subseteq Z$, and the proof is complete. 
\end{proof}

\begin{corollary}
Let $R$ be a ring of characteristic $c$ and suppose $G = R^\times$ is a finite group whose maximal abelian subgroup $H$ is a cyclic $p$-group. Then, $c = 0, 2, 4, q$ or $2q$ where $q$ is a Fermat prime. Moreover, the possible values of $H$, given $c$, are:
\begin{center}
\begin{tabular}{|l|l|}\hline
$c = 0, 4$ & $H = \C_2, \C_4$ \\ \hline
$c = 2$ & $H = \C_2, \C_4, \C_p$ ($p$ a Mersenne prime) \\ \hline
$c = 3, 6$ & $H = \C_2, \C_8$ \\ \hline
$c = q, 2q$ ($q > 3$ a Fermat prime) & $H = \C_{q-1}$ \\ \hline
\end{tabular}
\end{center}
\end{corollary}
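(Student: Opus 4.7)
The plan is to use Proposition \ref{maximal} to replace $G$ by its maximal abelian subgroup $H$, reducing to classifying, for each characteristic $c$, the cyclic $p$-groups that arise as unit groups of rings of characteristic $c$. By Proposition \ref{wlogRfinite}, for $c > 0$ I may take $H = S^\times$ for a finite commutative ring $S$ of characteristic $c$. Decomposing $S = \prod_i S_i$ into finite local rings, each $S_i^\times$ is a subgroup of the $p$-group $H$, hence is a $p$-group; since two nontrivial $p$-groups never have a cyclic product, at most one $S_i^\times$ is nontrivial. As $\F_2$ is the only finite local ring with trivial unit group, this gives $S \cong S_0 \times \F_2^{\,j}$ for some local finite ring $S_0$ with $S_0^\times = H$.

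I then analyze $S_0$ via the split isomorphism $S_0^\times \cong \F_{p_0^k}^\times \times (1 + \mathfrak{m}_0)$, where $p_0$ is the residue characteristic and $\F_{p_0^k}$ is the residue field. The first factor has order coprime to $p_0$ and the second is a $p_0$-group. If $p \ne p_0$, then $1 + \mathfrak{m}_0 = \{1\}$, $S_0 = \F_{p_0^k}$, and $p_0^k - 1 = p^m$. Applying Mihailescu's theorem (Catalan's conjecture): for $p = 2$, either $k = 1$ and $p_0 = 2^m + 1$ is a Fermat prime (so $H = \C_{p_0-1}$, $\ch(S_0) = p_0$), or $(p_0, k, m) = (3, 2, 3)$ (so $S_0 = \F_9$, $H = \C_8$, $\ch(S_0) = 3$); for $p$ odd, parity forces $p_0 = 2$ and Mihailescu then forces $m = 1$, giving $p = 2^k - 1$ a Mersenne prime, $S_0 = \F_{p+1}$, $\ch(S_0) = 2$.

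If $p = p_0$, then $\F_{p_0^k}^\times = \C_{p_0^k - 1}$ has order coprime to $p$ inside a $p$-group, so it is trivial; this forces $p = 2$ and residue field $\F_2$, with $H = 1 + \mathfrak{m}_0$ cyclic. I enumerate finite local rings with residue field $\F_2$ and cyclic principal unit group: a two-generator maximal ideal produces two independent involutions in $(1 + \mathfrak{m}_0)/(1 + \mathfrak{m}_0^2)$, so $\mathfrak{m}_0$ must be principal, and cyclicity then forces the nilpotency index of its generator to be at most $3$. The resulting rings are $\F_2$, $\F_2[x]/(x^2)$, $\F_2[x]/(x^3)$ (characteristic $2$, yielding $H = \C_1, \C_2, \C_4$) and $\Z/4$, $\Z[i]/(1+i)^3$ (characteristic $4$, yielding $H = \C_2, \C_4$); characteristic $2^n$ with $n \ge 3$ is excluded because $(\Z/2^n)^\times \cong \C_2 \times \C_{2^{n-2}}$ is noncyclic and embeds into $S_0^\times$.

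In characteristic $0$, the element $-1$ generates a $\C_2$ inside any nontrivial $H$, so $p = 2$; the classification of finite unit groups in torsion-free rings in \cite{dd} then restricts $H$ to $\C_2$ or $\C_4$ (realized by $\Z$ and $\Z[i]$). Taking $\ch(S) = \ch(S_0)$ when $j = 0$ and $\mathrm{lcm}(\ch(S_0), 2)$ when $j \ge 1$ assembles the table. The main obstacle is the local classification when $p = p_0 = 2$, in particular the verification that no local ring of characteristic $2$, $4$, or a higher power of $2$ with residue field $\F_2$ and length $\ge 4$ admits a cyclic principal unit group; this is the technical core of the argument.
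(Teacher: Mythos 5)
Your opening move is exactly the paper's: apply Proposition \ref{maximal} to reduce to the statement that the cyclic $p$-group $H$ itself is realizable in characteristic $c$, and then classify the possibilities. At that point the paper simply cites its predecessor \cite{cl-3} on indecomposable abelian groups and stops; you instead sketch a from-scratch re-derivation via Proposition \ref{wlogRfinite}, the decomposition of a finite commutative ring into local factors, the splitting $S_0^\times \cong \F_{p_0^k}^\times \times (1+\mathfrak{m}_0)$, and Mih\u{a}ilescu's theorem. The parts you carry out are sound: the reduction to a single local factor times copies of $\F_2$, the semisimple analysis giving the Fermat-prime, $\F_9$, and Mersenne cases, the principality of $\mathfrak{m}_0$ from cyclicity of $\mathfrak{m}_0/\mathfrak{m}_0^2$, and the exclusion of characteristic $2^n$ for $n \geq 3$ via $(\Z/2^n)^\times \cong \C_2 \times \C_{2^{n-2}}$.

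However, two genuine gaps remain. First, the claim that cyclicity of $1+\mathfrak{m}_0$ forces the chain ring $S_0$ to have length at most $3$ --- equivalently, that $\C_{2^r}$ with $r \geq 3$ is not realizable in characteristic $2$ or $4$ --- is asserted but not proved; you yourself label it ``the technical core.'' This is precisely the nontrivial content that \cite{cl-3} supplies (in characteristic $2$ it follows from Theorem \ref{unitprop}, since $\drank(\A_n) = \lfloor n/2\rfloor$, but the characteristic-$4$ chain rings require a separate argument), so as written the proof is incomplete exactly where the paper leans on its citation. Second, your characteristic-$0$ case cites the classification of finite unit groups of \emph{torsion-free} rings in \cite{dd}, but a ring of characteristic $0$ with finite unit group need not be torsion-free (e.g.\ $\Z \times \F_2[x]/(x^2)$), so that citation does not by itself rule out $H \cong \C_{2^r}$ for $r \geq 3$; what is needed is the characteristic-$0$ result of \cite{cl-3}, i.e.\ the fourth-root-of-$-1$ argument reproduced in \S\ref{char0sec} of this paper. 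With those two points repaired (most economically, by citing \cite{cl-3} as the paper does), your argument assembles the table correctly.
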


\begin{proof}
Let $G$ be as given. Then by Proposition \ref{maximal}, $H$, which is a cyclic $p$-group, is also realizable in the same characteristic as $R$. The result now follows from our work on the realizability of indecomposable abelian groups in \cite{cl-3}.
\end{proof}

We will make use of the following fact from ring theory (cf. \cite[3.1, 3.2]{ddx}).

\begin{proposition}\label{J0}
If $R$ is a finite ring of prime characteristic $p$ such that $\gcd(|R^\times|, p) = 1$, then \[ R \cong \prod_{i = 1}^n \F_{p^{k_i}}\] and \[ R^\times \cong \prod_{i = 1}^n \F_{p^{k_i}}^\times \cong \prod_{i = 1}^n \C_{p^{k_i} - 1}.\]\end{proposition}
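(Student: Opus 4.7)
The plan is to first establish that $R$ is semisimple via a Jacobson radical argument, then apply the classical structure theory for finite semisimple rings, and finally rule out all non-trivial matrix blocks using a simple order count.

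Since $R$ has prime characteristic $p$, its additive group is an elementary abelian $p$-group, so $|R|$ is a power of $p$. Let $J = J(R)$ denote the Jacobson radical, which is nilpotent because $R$ is (left) Artinian. I would first observe that $1 + J$ is a subgroup of $R^\times$: closure follows since $(1+x)(1+y) = 1 + (x + y + xy) \in 1 + J$, and inverses exist because $x$ nilpotent implies $1+x$ has an inverse given by a finite geometric series. The key point is that the map $x \mapsto 1 + x$ is a bijection between $J$ and $1 + J$, so $|1 + J| = |J|$, which divides $|R|$ and is therefore a power of $p$. Since $1 + J \subseteq R^\times$ and $\gcd(|R^\times|, p) = 1$, this forces $J = 0$, so $R$ is semisimple.

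Next I would invoke Wedderburn--Artin: every finite semisimple ring is isomorphic to a finite product of matrix rings $\matring_{n_i}(D_i)$ over division rings $D_i$. By Wedderburn's little theorem, each $D_i$ is a finite field, and since $R$ has characteristic $p$, each $D_i = \F_{p^{k_i}}$ for some $k_i \geq 1$. Thus
\[ R \cong \prod_{i=1}^n \matring_{n_i}(\F_{p^{k_i}}). \]

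Finally I would eliminate the possibility $n_i \geq 2$ for any $i$. The order of $\GL_{n_i}(\F_{p^{k_i}})$ is
\[ \prod_{j=0}^{n_i - 1}\bigl(p^{k_i n_i} - p^{k_i j}\bigr), \]
and factoring out $p^{k_i j}$ from the $j$-th term shows this is divisible by $p^{k_i (1 + 2 + \cdots + (n_i - 1))}$. When $n_i \geq 2$, this exponent is positive, so $p$ divides $|\GL_{n_i}(\F_{p^{k_i}})|$ and hence divides $|R^\times|$, contradicting the hypothesis. Therefore every $n_i = 1$, and the desired decomposition
\[ R \cong \prod_{i=1}^n \F_{p^{k_i}}, \qquad R^\times \cong \prod_{i=1}^n \C_{p^{k_i}-1} \]
follows immediately. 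The only mildly delicate step is the radical computation; the rest is structural. Since the cited references \cite{ddx} presumably contain essentially this argument, I would likely present it succinctly or just cite them.
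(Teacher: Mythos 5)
Your proof is correct and follows essentially the same route as the paper: kill the Jacobson radical using the subgroup $1+J\leq R^\times$ whose order is simultaneously a power of $p$ and coprime to $p$, then apply Artin--Wedderburn together with Wedderburn's little theorem. The only cosmetic difference is in excluding matrix blocks of size $n_i\geq 2$: you count $|\GL_{n_i}(\F_{p^{k_i}})|$ and observe it is divisible by $p$, whereas the paper exhibits an explicit unipotent unit of order $p$; both are standard and equally valid.
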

\begin{proof}
Let $J$ denote the Jacobson radical of $R$. It is well known that $1 + J \leq R^\times$. This means that $|J|$ is both a power of $p$ (since $J$ is a two-sided ideal of $R$, which is an $\F_p$-vector space) and relatively prime to $p$ (since $|J|$ divides $|R^\times|$ and $\gcd(|R^\times|, p) = 1$). Hence, $|J| = 1$ and thus $J = \{0\}$. Now, $R$ must be an Artinian ring with trivial Jacobson radical, hence semisimple. By the Artin-Wedderburn Theorem, $R$ is a product of matrix rings over division rings, but since the division rings must be finite, they are in fact finite fields (of characteristic $p$) by Wedderburn's Little Theorem. This implies that \[ R \cong \prod_{i = 1}^n \mathbf{M}_{n_i}(\F_{p^{k_i}}).\]

Now suppose $n = n_i > 1$ for some $i$. Let $\mathbf{e}_{n-1} \in \R^{n - 1}$ be the coordinate vector $\mathbf{e}_{n-1} = (0, 0, \dots, 0, 1)$. One can check that the block matrix \[ X = \mm{1}{\mathbf{e}_{n-1}}{\mathbf{0}}{I_{n-1}} \] satisfies \[ X^k = \mm{1}{k\mathbf{e}_{n-1}}{\mathbf{0}}{I_{n-1}} \] and thus $X$ is a unit of order $p$. However, since $\gcd(|R^\times|, p) = 1$, $R^\times$ has no elements of order $p$, so $n_i = 1$ for all $i$ and the conclusion in the statement of the proposition now follows.
\end{proof}

The following construction is useful for building examples of unit groups.

\begin{proposition}\label{matrixtrick}
Let $R$ be a unital ring, let $H$ be an $R$-$R$-bimodule, and let \[ M(R, H) = \left\{ \mm{r}{h}{0}{r} \, | \, r \in R, h \in H \right\}. \] 
\begin{enumerate}
\item The set $M(R, H)$ is a unital ring and $R$-algebra under matrix addition and multiplication. \label{isring}
\item The rings $R$ and $M(R, H)$ have the same characteristic. \label{samechar}
\item The group of units $M(R, H)^\times$ is isomorphic to the group $(R^\times \times H, \cdot)$ where \[ (r, h)\cdot (s, k) = (rs, rk + hs) \text{ \ \  and \ \  }  (r, h)^{-1} = (r^{-1}, -r^{-1}hr^{-1}).\]\label{unitgroup}
\vspace{-.2in}
\item If $R$ is commutative and the bimodule structure on $H$ satisfies $rh = hr$ for all $r \in R$ and $h \in H$, then $M(R,H)^\times \cong R^\times \times (H, +)$.\label{communitgroup}
\item If $R$ is a local ring, then $M(R, H)$ is a local ring.\label{local}
\end{enumerate}
\end{proposition}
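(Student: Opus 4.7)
The plan is to verify all five statements by direct matrix computation. For (1) and (2), first observe that the block product
\[ \mm{r}{h}{0}{r}\mm{s}{k}{0}{s} = \mm{rs}{rk + hs}{0}{rs} \]
has equal diagonal entries, so $M(R,H)$ is closed under matrix multiplication; closure under addition is immediate. Since $M(R,H)$ contains $I_2$ and inherits the ring and $R$-algebra axioms from $\matring_2(R)$ (with $R$ acting through scalar matrices along the diagonal), part (1) follows. The characteristic is controlled by $n \cdot I_2$, which is $0$ if and only if $n \cdot 1_R = 0$, giving (2).

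For (3), the multiplication formula claimed for $(R^\times \times H, \cdot)$ is exactly what the block product above yields, once we identify $\mm{r}{h}{0}{r}$ with $(r,h)$. To show $(r,h)$ is a unit if and only if $r \in R^\times$: the ``only if'' direction comes from the ring homomorphism $M(R,H) \to R$, $\mm{r}{h}{0}{r} \mapsto r$, which sends units to units; the ``if'' direction is the direct verification that $(r^{-1}, -r^{-1}hr^{-1})$ is a two-sided inverse of $(r,h)$ under the multiplication above.

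For (4), assuming $R$ commutative and $rh = hr$, I would define $\psi : M(R,H)^\times \to R^\times \times (H,+)$ by $\psi(r,h) = (r, r^{-1}h)$ and check multiplicativity: expanding $\psi((r,h)(s,k)) = (rs,\, (rs)^{-1}(rk+hs))$ and using commutativity of $R$ together with $s^{-1}h = hs^{-1}$ to simplify each summand produces $(rs,\, s^{-1}k + r^{-1}h)$, which equals $\psi(r,h)\psi(s,k)$ under the direct-product structure on the target. Bijectivity follows from the obvious inverse $(r, h') \mapsto (r, rh')$. For (5), part (3) identifies the non-units of $M(R,H)$ with the set of pairs whose first coordinate is a non-unit of $R$; when $R$ is local, these non-units form a two-sided ideal of $R$, and the corresponding set in $M(R,H)$ is closed under addition and absorbs multiplication, hence is the unique maximal ideal of $M(R,H)$. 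The only subtlety in the whole argument is the commutativity bookkeeping in (4), making sure $rh = hr$ is invoked in the right place; everything else is routine linear algebra over $R$.
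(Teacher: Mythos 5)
Your proposal is correct and follows essentially the same route as the paper: direct matrix computation for (1)--(3), an explicit isomorphism for (4) (yours, $\psi(r,h) = (r, r^{-1}h)$, is just the inverse of the paper's $\Phi(r,k) = (r, rk)$), and identification of the unique maximal ideal as the set of elements with non-unit diagonal entry for (5). No gaps.
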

\begin{proof}
It is straightforward to check that $M(R, H)$ is a unital ring and an $R$-algebra under entry-wise addition and matrix multiplication; the identity is the usual identity matrix, and $R$ is a subring of $M(R,H)$ as the set of diagonal matrices. Since $M(R, H)$ is an $R$-algebra, the two rings have the same characteristic. This establishes (\ref{isring}) and (\ref{samechar}).

One can check that $A = \mm{r}{h}{0}{r}$ is invertible if and only if $r \in R^\times$, and \[ \mm{r}{h}{0}{r}^{-1} = \mm{r^{-1}}{-r^{-1}hr^{-1}}{0}{r^{-1}}.\] This establishes (\ref{unitgroup}). When $R$ is commutative, the map \[ \map{\Phi}{R^\times \times (H, +)}{(R^\times \times H, \cdot)} \] defined by $\Phi(r, k) = (r, rk)$ is a group isomorphism, establishing (\ref{communitgroup}).

Now suppose $R$ is a local ring with unique (two-sided) maximal ideal $J$. It is straightforward to check that \[ J_M = \left\{ \mm{j}{h}{0}{j} \, | \, j \in J, h \in H \right\}\] is the unique maximal ideal of $M(R, H)$; it is an ideal with $M(R, H)/J_M \cong R/J$ and its complement consists of units. Thus, $M(R, H)$ is a local ring, establishing (\ref{local}).
\end{proof}

\section{Rings of odd characteristic} \label{oddcharsec}

We will now determine the possible characteristics of a ring $R$ whose group of units is a $2$-group. First, observe that the ring may have characteristic zero, as the examples \[ \Z^\times = \C_2 \ \ \text{ and } \ \Z[i]^\times = \C_4 \] illustrate. For positive characteristics, we have the following theorem.

\begin{theorem} \label{2fchar}
Let $n$ be a positive integer. The following statements are equivalent.
\begin{enumerate}
\item  A $2$-group is realizable in characteristic $n$. \label{2gr}
\item The positive integer $n$ is of the form $2^ap_1p_2 \cdots p_k$ where the $p_i$'s are distinct Fermat primes, $a \ge 0$ and  $k \ge 0$. \label{fpp}
\item  It is possible to construct a regular $n$-gon with straightedge and compass. \label{ngon}
\end{enumerate}
\end{theorem}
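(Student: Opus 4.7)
The equivalence (\ref{fpp}) $\Leftrightarrow$ (\ref{ngon}) is the classical Gauss--Wantzel theorem, which I will invoke directly, so the real work is to prove (\ref{2gr}) $\Leftrightarrow$ (\ref{fpp}). For the easy direction (\ref{fpp}) $\Rightarrow$ (\ref{2gr}), my plan is to exhibit $R = \Z/n\Z$: by the Chinese Remainder Theorem its unit group splits as $(\Z/2^a)^\times \times \prod_{i=1}^k \C_{p_i - 1}$, and every factor is a 2-group since each $p_i - 1$ is a power of $2$.

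For the converse (\ref{2gr}) $\Rightarrow$ (\ref{fpp}), suppose $R$ has characteristic $n$ and $R^\times$ is a 2-group. Proposition \ref{wlogRfinite} lets me assume $R$ is finite. The additive primary decomposition $(R, +) = \bigoplus_{p \mid n} R_p$ is actually a ring decomposition: each $R_p$ is a two-sided ideal, and $R_p R_q = 0$ whenever $p \neq q$ (any product $xy$ with $x \in R_p$, $y \in R_q$ has additive order dividing both a power of $p$ and a power of $q$, hence order $1$). Consequently $R^\times \cong \prod_{p \mid n} R_p^\times$, and each $R_p^\times$ is itself a 2-group.

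Next, I would fix an odd prime $p \mid n$ and show $R_p \cong \prod_j \F_{p^{k_j}}$, which in particular forces $R_p$ to have characteristic exactly $p$. A Jacobson-radical argument is the key: $1 + J(R_p) \subseteq R_p^\times$ has order a power of $p$ but lies in a 2-group, so $J(R_p) = 0$. Then Artin--Wedderburn and Wedderburn's Little Theorem reduce $R_p$ to a product of matrix rings over finite fields of characteristic $p$, and the transvection argument from the proof of Proposition \ref{J0} rules out matrix sizes greater than $1$. Therefore $n = 2^a p_1 \cdots p_k$ with the $p_i$ distinct odd primes.

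It remains to show each $p_i$ is a Fermat prime: each factor $\F_{p^k}^\times \cong \C_{p^k - 1}$ in $R_{p_i}$ must be a 2-group, so $p^k - 1$ is a power of $2$. For $k = 1$ this is immediate. For $k \ge 2$, I would factor $p^k - 1 = (p - 1)(1 + p + \cdots + p^{k-1})$, observe that the second factor is a sum of $k$ odd terms so $k$ must be even, and then write $p^k - 1 = (p^{k/2} - 1)(p^{k/2} + 1)$ as a product of two powers of $2$ differing by $2$; this forces $\{p^{k/2} - 1, p^{k/2} + 1\} = \{2, 4\}$, so $p = 3$ and $k = 2$. Either way, $p$ is Fermat. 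The main obstacle is the $p$-primary ring decomposition paired with the Jacobson/Wedderburn reduction that eliminates higher powers of odd primes from the characteristic; everything else invokes already-established machinery or is a short elementary calculation.
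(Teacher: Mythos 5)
Your proof is correct, but for the implication (\ref{2gr}) $\Rightarrow$ (\ref{fpp}) it takes a genuinely different and considerably heavier route than the paper. The paper's argument never leaves the prime subring: since $R$ has characteristic $n$, it contains $\Z_n$ as a subring, and units of a unital subring are units of $R$, so $\Z_n^\times$ is a $2$-group; if $p^t$ exactly divides $n$ for an odd prime $p$, then $\Z_{p^t}^\times$ is a $2$-group of order $p^{t-1}(p-1)$, which immediately forces $t=1$ and $p-1$ a power of $2$. You instead pass to a finite ring, split it into its additive $p$-primary components, and run the Jacobson-radical/Artin--Wedderburn/Wedderburn analysis of Proposition \ref{J0} on each odd component, finishing with the elementary factorization showing that $p^k-1$ a power of $2$ forces $p$ Fermat (with $p=3$, $k=2$ as the only case with $k>1$). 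All of these steps are sound, and your route buys strictly more: it determines the full structure of the odd part of $R$ (a product of fields $\F_{p_i}$ and possibly $\F_9$), which is essentially the content the paper defers to the proposition following Theorem \ref{2fchar} and to \cite[2.2]{cl-2}. What it costs is the reliance on Proposition \ref{wlogRfinite}: that reduction requires the unit group to be finite, whereas statement (\ref{2gr}) as written does not assume the $2$-group is finite, and the paper's prime-subring argument applies verbatim to an infinite $2$-group of units. If you want your version to cover that case, replace the appeal to Proposition \ref{wlogRfinite} by the observation that $\Z_n^\times \subseteq R^\times$ is already a finite $2$-group and apply your analysis to $\Z_n$ alone --- at which point you have essentially rediscovered the paper's proof. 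Your treatment of (\ref{fpp}) $\Rightarrow$ (\ref{2gr}) via $\Z/n\Z$ and of the Gauss--Wantzel equivalence of (\ref{fpp}) and (\ref{ngon}) matches the paper exactly.
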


\begin{proof}
The equivalence of (\ref{fpp}) and (\ref{ngon}) is classical (the Gauss-Wantzel Theorem). We show that (\ref{2gr}) and (\ref{fpp}) are equivalent. To see that (\ref{fpp}) implies (\ref{2gr}), let $n$ be as in (\ref{fpp}) and consider the ring $\Z_n$. Let $p_i = 2^{n_i}+1$.  Since the primes $2, p_1, \dots, p_k$ are pair-wise distinct,
\[ \Z_n = \Z_{2^a} \times \Z_{p_1} \times \cdots \Z_{p_k}.\] 
Taking units, we obtain 
\[ \Z_n^\times = \Z_{2^a}^\times \times \C_{2^{n_1}} \times \cdots  \C_{2^{n_k}}\]
Since $\Z_{2^a}^\times$ is always a 2-group, $\Z_{n}^\times$ is a 2-group. 

Finally, to see that (\ref{2gr}) implies (\ref{fpp}), let $R$ be a ring of characteristic $n > 0$ such that $R^\times = G$ is a $2$-group. Suppose $p$ is an odd prime with $p^t \mid n$ and $p^{t+1} \nmid n$. Then, $\Z_{p^t}$ is a factor ring of $\Z_n$. This means 
\[ \Z_{p^t}^\times  \subseteq \Z_n^\times \subseteq G. \]
It follows that $\Z_{p^t}^\times$ is a nontrivial $2$-group. Since $|\Z_{p^t}^\times| = p^{t-1}(p-1) $, we must have that $t=1$ and $p-1$ is a power of 2; i.e., $p$ is a Fermat prime.  This shows that $n$ has to be of the form given in (\ref{fpp}).\end{proof}

For a ring $R$ of characteristic $n$, where $n$ has the form given in Theorem \ref{2fchar} (\ref{fpp}), we have 
\[R  = S \times R_1 \times \cdots R_k  \]
where $S$ has characteristic $2^a$ and $R_i$ has characteristic $p_i$, a Fermat prime. This gives
\[R^\times  = S^\times \times R_1^\times \times \cdots R_k^\times.  \]
Which finite $2$-groups are realizable in each of these characteristics? The next proposition answers this question when the characteristic is a Fermat prime.

\begin{proposition}
Let $p$ be a Fermat prime and let $R$ be a ring of characteristic $p$ whose unit group is a finite $2$-group. Then,
\[ R^\times = \begin{cases}
(\C_2)^s \times (\C_8)^t  &   p = 3 \\
(\C_{2^n})^s &  p  = 2^n +1  > 3,
\end{cases}
\]
where the exponents $s$ and $t$ are non-negative integers. Conversely, the above groups are realizable in characteristic $p$ since $\F_9^\times \cong \C_8$ and, for $p = 2^n + 1$ a Fermat prime, $\F_p^\times \cong \C_{2^n}.$
\end{proposition}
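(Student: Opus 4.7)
The plan is to combine Proposition \ref{wlogRfinite} and Proposition \ref{J0} to reduce the problem to an elementary question about when $p^k-1$ is a power of $2$, and then finish by a short case analysis on $k$.

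First, since $R^\times$ is a finite $2$-group, Proposition \ref{wlogRfinite} lets me assume $R$ is finite. Because $\ch R = p$ is odd while $|R^\times|$ is a power of $2$, we have $\gcd(|R^\times|,p)=1$, so Proposition \ref{J0} gives
\[ R \cong \prod_{i=1}^{n}\F_{p^{k_i}} \qquad\text{and}\qquad R^\times \cong \prod_{i=1}^{n}\C_{p^{k_i}-1}. \]
Hence $R^\times$ is a $2$-group if and only if each $p^{k_i}-1$ is a power of $2$, and it remains to classify the integers $k\ge 1$ for which $p^k - 1$ is a power of $2$, where $p$ is a fixed odd prime.

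For the classification I would split on $k$. If $k=1$, the condition is exactly that $p$ is a Fermat prime. If $k = 2j$ is even, I factor $p^{2j}-1 = (p^j-1)(p^j+1)$; both factors must be powers of $2$ differing by $2$, so they are $2$ and $4$, which forces $p^j = 3$, hence $p=3$ and $k=2$. If $k\ge 3$ is odd, I write
\[ p^k - 1 = (p-1)\bigl(1 + p + p^2 + \cdots + p^{k-1}\bigr), \]
and note that the second factor is a sum of $k$ odd numbers, hence odd, and at least $k\ge 3$; it therefore contributes an odd prime divisor to $p^k-1$, contradicting our assumption.

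Putting these together, for $p=3$ the only admissible values of $p^k-1$ are $2$ and $8$, so $R^\times$ is an arbitrary product of copies of $\C_2$ and $\C_8$; for a Fermat prime $p=2^n+1 > 3$ only $k=1$ survives, and $R^\times$ is a power of $\C_{2^n}$. The converse is immediate: the product rings $\F_3^s\times \F_9^t$ and $\F_p^s$ realize every unit group on the stated list. The only step requiring thought is the number-theoretic classification; the case $k \ge 3$ odd is the one piece that does not reduce to a simple factorization of a difference of squares, but the parity argument above handles it cleanly without invoking Mihailescu's theorem.
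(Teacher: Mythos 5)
Your argument is correct, and its skeleton is exactly the paper's: reduce to a finite ring via Proposition \ref{wlogRfinite}, note that $\gcd(|R^\times|,p)=1$ forces $R\cong\prod_i\F_{p^{k_i}}$ and $R^\times\cong\prod_i\C_{p^{k_i}-1}$ by Proposition \ref{J0}, and then classify when $p^{k}-1$ is a power of $2$. The one place you diverge is that last number-theoretic step: the paper simply cites \cite[2.2]{cl-2} for the classification (and remarks afterward that the appearance of $\C_8$ at $p=3$ is ``essentially a consequence of Catalan's Conjecture''), whereas you prove it from scratch. Your three-way split on $k$ is sound: for $k=2j$ the factors $p^j-1$ and $p^j+1$ are powers of $2$ differing by $2$, hence $2$ and $4$, forcing $p^j=3$; for odd $k\ge 3$ the cofactor $1+p+\cdots+p^{k-1}$ is a sum of $k$ odd terms, hence an odd number greater than $1$, supplying a forbidden odd prime divisor. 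This makes explicit that only the easy special case of Catalan (an equation $p^k-1=2^m$, not the general $a^x-b^y=1$) is needed, so the self-contained version costs only a few lines and removes an external dependency; the paper's version is shorter on the page but leans on the cited lemma. Everything else, including the converse via $\F_3^s\times\F_9^t$ and $\F_p^s$, matches the paper.
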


\begin{proof}
Since the unit group of $R$ is finite, we may assume $R$ is finite by Proposition \ref{wlogRfinite}, and now by Proposition \ref{J0} we have that $R^\times = \prod_{i=1}^n \C_{p^{k_i} - 1}$. Hence, $p^{k_i} - 1$ is a power of $2$. This means either $p$ is $3$ and $k_i = 1$ or $2$, or $p$ is a Fermat prime bigger than $3$ and $k_i =1$ (see \cite[2.2]{cl-2} for a proof).
 \end{proof}

The peculiar occurrence of $\C_8$ when $p = 3$ is essentially a consequence of Catalan's Conjecture (proved in 2002 by  Preda Mih\u{a}ilescu), which says that the only nontrivial solution to $a^x - b^y = 1$ is $a = 3, x = 2, b = 2$, and $y = 3$. We now have the complete solution to Fuchs' problem for finite $2$-groups over rings of odd characteristic summarized in Theorem \ref{oddchar}.

\section{Rings of characteristic 2} \label{char2}

In this section we consider the question: which finite abelian $2$-groups are realizable in characteristic 2? This question is narrower in the obvious sense that we are restricting the characteristic of the ring; on the other hand, knowing that a $2$-group is realizable over the class of unital rings does not imply that it is realizable as the group of units of a ring of characteristic 2. For example, according to \cite{cl-3}, the only indecomposable $2$-groups that are realizable in characteristic 2 are $\C_2$ and $\C_4$; $\C_8$ is not, though of course $\C_8 \cong \F_9^\times$. 

Suppose $G$ is a finite abelian 2-group and $R$ is a ring of characteristic 2 with $R^\times \cong G$. There is a ring homomorphism \[\map{\phi}{\mathbf{F}_2[G]}{R}\] that restricts to the identity map on $G$. So $G$ is in fact the group of units of a quotient of the group algebra $\mathbf{F}_2[G]$. This group algebra is a tensor product of group algebras of the form $\F_2[\C_{2^n}] \cong \F_2[x]/(x^{2^n})$. The group of units in this ring is given below. 

\begin{theorem}[{\cite[2.4]{cl-4}}] The group of units of $\mathbf{F}_p[x]/(x^n)$ is isomorphic to \[\A_n = \C_{p-1} \times \left(\bigoplus_{1 \leq k < 1 + \log_p n} \C_{p^k}^{\left\lceil \frac{n}{p^{k-1}} \right\rceil - 2\left\lceil \frac{n}{p^k} \right\rceil + \left\lceil \frac{n}{p^{k+1}} \right\rceil}\right).\] In particular, \[ \A_{p^{r-1} + 1} = \mathbf{C}_{p-1} \times \mathbf{C}_{p^r} \times \mathbf{C}^{p-2}_{p^{r-1}} \times \mathbf{C}^{(p-1)^2}_{p^{r-2}} \times \mathbf{C}_{p^{r-3}}^{p(p-1)^2} \times \cdots \times  \mathbf{C}_{p^{2}}^{p^{r-4}(p-1)^2} \times  \mathbf{C}_{p}^{p^{r-3}(p-1)^2} \] and \[ \A_{p^r} = \C_{p-1} \times \C_{p^r}^{p-1}\times \C_{p^{r-1}}^{(p-1)^2} \times \C_{p^{r-2}}^{p(p-1)^2} \times \cdots \times \C_p^{p^{r-2}(p-1)^2}.\] If $p = 2$, then $\drank(\A_n) = \left\lfloor \frac{n}{2} \right\rfloor$.
\label{unitprop}
\end{theorem}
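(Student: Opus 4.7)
The plan is to split $R^\times$ as a direct product of the constant units and the $1$-units, and then analyze the structure of the $1$-units via the $x$-adic filtration together with the Frobenius.

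First I would write $R = \F_p[x]/(x^n)$ and observe that an element $a_0 + a_1 x + \cdots + a_{n-1}x^{n-1}$ is a unit exactly when $a_0 \neq 0$. This yields a short exact sequence
\[ 1 \to 1 + xR \to R^\times \to \F_p^\times \to 1, \]
which is split by the inclusion $\F_p^\times \hookrightarrow R^\times$ as constants. Since the group is abelian, $R^\times \cong \C_{p-1} \times (1+xR)$, so it remains to determine the $p$-group $U := 1 + xR$.

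Next I would exploit the filtration $U_k := 1 + x^k R$ for $k \geq 1$, together with the crucial characteristic-$p$ identity $(1+g)^p = 1 + g^p$. Let $v$ denote the $x$-adic valuation on $R$. For $1+g \in U$ with $v(g) = k$, we get $v(g^{p^j}) = k p^j$, so $(1+g)^{p^j} = 1$ if and only if $kp^j \geq n$; equivalently, the order of $1+g$ equals $p^{\lceil \log_p(n/k)\rceil}$. Consequently, the subgroup $U[p^j]$ of elements of order dividing $p^j$ is precisely $U_{\lceil n/p^j \rceil}$, which has order $p^{\,n - \lceil n/p^j \rceil}$.

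Now I would recover the cyclic invariants of $U$ from the sequence of subgroup sizes. Writing $U \cong \bigoplus_k \C_{p^k}^{m_k}$, one has $|U[p^j]| = p^{\sum_k \min(k,j) m_k}$. Combining this with the previous step gives $\sum_k \min(k,j) m_k = n - \lceil n/p^j\rceil$. Taking successive differences $a_j - a_{j-1}$ of the exponents collapses to $\sum_{k \geq j} m_k = \lceil n/p^{j-1}\rceil - \lceil n/p^j\rceil$, and then one more difference yields
\[ m_j \;=\; \lceil n/p^{j-1}\rceil - 2\lceil n/p^j\rceil + \lceil n/p^{j+1}\rceil, \]
which, combined with the $\C_{p-1}$ factor, gives the stated decomposition of $\A_n$. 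The special forms $\A_{p^{r-1}+1}$ and $\A_{p^r}$ then follow by plugging into the ceiling expressions and simplifying (for instance, when $n = p^r$ the ceilings become exact powers so the differences telescope cleanly, and for $n = p^{r-1}+1$ one separates the top factor $\C_{p^r}$ arising from the single element $1 + x$ of maximal order). Finally, setting $p = 2$ and summing the $m_j$ gives $\drank(\A_n) = \sum_{j \geq 1} m_j = \lceil n \rceil - \lceil n/2\rceil = \lfloor n/2 \rfloor$ (since $\C_{p-1}$ is trivial and therefore contributes nothing).

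The main obstacle is purely bookkeeping with the ceiling function when extracting $m_j$ and verifying the closed forms in the two special cases; no conceptual difficulty beyond the Frobenius observation is involved.
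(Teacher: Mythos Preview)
Your argument is correct. The paper itself does not reprove the main decomposition; it simply cites \cite[2.4]{cl-4} for the general formula, observes that the two displayed special cases follow by substitution, and then deduces the rank statement from the single fact (quoted from the proof in \cite{cl-4}) that the number of elements of order dividing $2$ in $\A_n$ is $2^{\,n-\lceil n/2\rceil}$, whence the number of cyclic factors is $n-\lceil n/2\rceil=\lfloor n/2\rfloor$.

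Your self-contained proof is precisely the argument underlying that citation: split off $\C_{p-1}$, use the Frobenius identity $(1+g)^{p^j}=1+g^{p^j}$ together with the $x$-adic valuation to identify $U[p^j]=U_{\lceil n/p^j\rceil}$, and take second differences of the exponents $n-\lceil n/p^j\rceil$ to read off the multiplicities $m_j$. Your derivation of the rank via the telescoping sum $\sum_j m_j = \lceil n\rceil - \lceil n/2\rceil$ is literally the same computation as the paper's, since the rank of a finite abelian $2$-group equals $\log_2\lvert U[2]\rvert$, which is exactly your $b_1$. So the approaches coincide; you have simply supplied the details the paper outsources to \cite{cl-4}.
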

\begin{proof} The first statement is \cite[2.4]{cl-4}, and the next two equalities follow from that statement. In the proof of \cite[2.4]{cl-4}, it is shown that the number of elements in $\A_n$ of order a divisor of 2 is $\alpha_2 = 2^{n - \left\lceil \frac{n}{2} \right\rceil}$. Since $\A_n$ is an abelian $2$-group, this means that the canonical decomposition of $\A_n$ as a product of cyclic groups must have $n - \left\lceil \frac{n}{2} \right\rceil = \left\lfloor \frac{n}{2} \right\rfloor$ factors. This proves the final statement of the theorem.
\end{proof}

\begin{theorem}[{\cite[2.5]{cl-4}}] Let $R$ be a ring of prime characteristic $p$ whose group of units contains an element of order $p^r \geq p^2$. \label{rankprop}
\begin{enumerate}
\item If $p > 2$, then $R^\times$ has a noncyclic finite abelian subgroup $G$ such that $\trank(G) \geq 1 + (p-1)p^{r-2}$. \label{rankpropodd}
\item If $p = 2$, then $R^\times$ has a finite abelian subgroup $G$ such that $\trank(G) \geq 2^{r-2}$. If $r \geq 3$, then $G$ is noncyclic. \label{rankpropeven}
\end{enumerate}
\end{theorem}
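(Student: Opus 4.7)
The plan is to reduce to the commutative subring of $R$ generated by a single element $u \in R^\times$ of order $p^r$, then identify its unit group with one of the groups $\A_n$ studied in Theorem \ref{unitprop}. Setting $S = \mathbf{F}_p[u]$ and $x = u - 1$, the element $x$ is nilpotent in $S$: the Frobenius identity in characteristic $p$ gives $x^{p^r} = u^{p^r} - 1 = 0$, while $x^{p^{r-1}} \ne 0$ since $u$ has order exactly $p^r$. Hence $S \cong \mathbf{F}_p[x]/(x^n)$ for some $n$ with $p^{r-1} < n \leq p^r$, and $\A_n \cong S^\times$ embeds in $R^\times$, reducing the problem to a lower bound on $\drank(\A_n)$.

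Next I would extract such a bound for every admissible $n$. Rather than unfolding the expanded formulas in Theorem \ref{unitprop}, the cleanest route is to compute the $p$-rank directly: a typical unit $\sum_{i=0}^{n-1} a_i x^i$ (with $a_0 \ne 0$) raised to the $p$-th power equals $\sum_i a_i x^{ip}$ by Frobenius, and this equals $1$ precisely when $a_0 = 1$ and $a_i = 0$ for every $i < n/p$. The $p$-rank of $\A_n$ is therefore exactly $n - \lceil n/p \rceil$; this quantity is nondecreasing in $n$ and at $n = p^{r-1} + 1$ evaluates to $(p-1)p^{r-2}$.

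To finish, I would specialize to the two cases. For part (\ref{rankpropeven}), Theorem \ref{unitprop} records $\drank(\A_n) = \lfloor n/2 \rfloor$, and $n \geq 2^{r-1}+1$ forces this to be at least $2^{r-2}$; noncyclicity when $r \geq 3$ is automatic since $2^{r-2} \geq 2$. For part (\ref{rankpropodd}) with $p$ odd, the $p$-rank bound yields $(p-1)p^{r-2}$ cyclic $p$-power summands in the canonical decomposition, and one gains at least one additional summand from the $\C_{p-1}$ factor because $\gcd(p-1, p) = 1$ prevents it from being absorbed into the $p$-power part of the decomposition. The total rank is therefore at least $1 + (p-1)p^{r-2} \geq 3$, which simultaneously gives the quantitative bound and forces noncyclicity.

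The step I expect to require the most care is the final bookkeeping on ranks in the odd case: correctly separating the contribution of $\C_{p-1}$ from the contribution of the $p$-Sylow, and verifying that the $p$-rank bound is monotone in $n$ so that it suffices to check it at the smallest admissible value $n = p^{r-1}+1$. Everything else is a direct application of Theorem \ref{unitprop} to the subring $\mathbf{F}_p[u]$.
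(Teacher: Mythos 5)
Your argument is correct and is essentially the argument behind the cited result \cite[2.5]{cl-4}, which the present paper quotes without proof: pass to the subring $\F_p[u] \cong \F_p[x]/(x^n)$ with $p^{r-1} < n \le p^r$, note that its unit group $\A_n$ sits inside $R^\times$, and bound $\drank(\A_n)$ from below via the count $n - \lceil n/p\rceil$ of $p$-power summands (which is monotone in $n$ and equals $(p-1)p^{r-2}$ at $n = p^{r-1}+1$), adding one summand for $\C_{p-1}$ when $p>2$. One small caution on your closing sentence: noncyclicity in the odd case should be attributed to the $p$-part alone having $(p-1)p^{r-2} \ge 2$ cyclic $p$-power summands --- total rank $\ge 3$ by itself does not preclude cyclicity (e.g.\ $\C_{30}$ has rank $3$) --- but this is exactly what your $p$-rank count already establishes.
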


Unfortunately, it is not easy to compute the group of units in a tensor product of rings (let alone a quotient of such a product). We have the following corollaries to the last two theorems.

\begin{corollary}
Given any finite abelian $2$-group $G$, there is a finite abelian $2$-group $H$ such that $G\times H$ is realizable as the group of units of a ring of characteristic $2.$
\end{corollary}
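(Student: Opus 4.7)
My plan is to take $R = \F_2[G]$, the $\F_2$-group algebra of $G$, and show that $G$ is a direct summand of the unit group $R^\times$; the complementary summand will be the required $H$.

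First I would verify that $R^\times$ is a finite abelian $2$-group containing $G$. Because $G$ is a finite abelian $2$-group, each generator $g-1$ of the augmentation ideal $I \subseteq R$ satisfies $(g-1)^{2^n} = g^{2^n}-1 = 0$ in characteristic $2$ for sufficiently large $n$, so $I$ is nilpotent. Thus $R$ is a finite commutative local ring with residue field $\F_2$, every unit has the form $1+i$ with $i \in I$ nilpotent, and $(1+i)^{2^m} = 1 + i^{2^m} = 1$ for large $m$. The main step is to show that the inclusion $G \hookrightarrow R^\times$ is pure; for finite abelian $p$-groups, purity is equivalent to being a direct summand. So suppose $g \in G$ and $g = u^{2^n}$ for some $u = \sum_{h \in G} c_h h \in R^\times$. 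Since $R$ is commutative of characteristic $2$, the Frobenius is a ring endomorphism, and
\[ g \;=\; u^{2^n} \;=\; \sum_{h \in G} c_h^{2^n} h^{2^n} \;=\; \sum_{h \in G} c_h\, h^{2^n}. \]
Comparing the coefficient of the basis vector $g$ on the two sides forces some $h \in G$ with $c_h = 1$ and $h^{2^n} = g$, so $g \in G^{2^n}$ and $G$ is pure in $R^\times$.

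I do not anticipate a serious obstacle. The proof hinges on the observation that in $\F_2[G]$ the characteristic-$2$ Frobenius reduces $2^n$-th powers of arbitrary units to $\F_2$-linear combinations of $2^n$-th powers of elements of $G$, which forces any $2^n$-th power that lies in $G$ to come from $G$ itself. Once purity is established, $R^\times \cong G \times H$ for some finite abelian $2$-group $H$, and the corollary follows.
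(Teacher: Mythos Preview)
Your argument is correct. The Frobenius computation is the heart of it: writing $u=\sum_h c_h h$ with $c_h\in\F_2$, the coefficient of the basis element $g$ in $u^{2^n}=\sum_h c_h h^{2^n}$ is $\sum_{h:\,h^{2^n}=g} c_h$, and this being $1$ forces the preimage set to be nonempty, so $g\in G^{2^n}$. (Your phrasing ``forces some $h$ with $c_h=1$ and $h^{2^n}=g$'' is slightly loose---several such $h$ could contribute and cancel in pairs---but the nonemptiness conclusion, which is all you need, survives.) Purity in a finite abelian group then gives the splitting.

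The paper's route is different and shorter, but it leans on machinery you avoid. The paper invokes the explicit decomposition of $\A_n=(\F_2[x]/(x^n))^\times$ from Theorem~\ref{unitprop} and simply observes that every finite abelian $2$-group occurs as a summand of $\A_n$ for $n$ large enough (since the multiplicities of the cyclic factors $\C_{2^k}$ in $\A_{2^r}$ grow without bound as $r\to\infty$). So the paper works with a single-variable truncated polynomial ring and an explicit structure theorem, whereas you work with the full group algebra $\F_2[G]\cong\bigotimes_i \F_2[x_i]/(x_i^{2^{n_i}})$ and a purity argument. Your approach is self-contained and conceptual---it explains \emph{why} $G$ sits as a summand without ever computing $R^\times$---while the paper's approach is a one-liner once the formula for $\A_n$ is in hand and has the mild advantage of producing a realizing ring of the especially simple form $\F_2[x]/(x^n)$.
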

\begin{proof}
Every finite abelian $2$-group is a summand of $\A_n$ for some $n$.
\end{proof}

\begin{corollary} If $n$ is a positive integer and $G$ is a finite abelian $2$-group of rank $n$ that is realizable in characteristic $2$, then $|x| \leq 4n$ for all $x \in G$. Consequently, there are only finitely many abelian $2$-groups with $\drank(G) = n$ that are realizable in characteristic $2$. \label{rank-limitation}
\end{corollary}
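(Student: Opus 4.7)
The plan is to apply Theorem \ref{rankprop}(\ref{rankpropeven}) directly to $R$ and then use the elementary fact that rank is monotone on subgroups of a finite abelian $p$-group. By Proposition \ref{wlogRfinite}, I may assume $R$ is a finite commutative ring of characteristic $2$ with $R^\times \cong G$, although this reduction is not strictly needed for the argument.

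Pick $x \in G$ and let $2^r$ be its order. The cases $r = 0, 1$ are automatic since $2 \leq 4n$ whenever $n \geq 1$. For $r \geq 2$, Theorem \ref{rankprop}(\ref{rankpropeven}) applied to $R$ produces a finite abelian subgroup $H \leq R^\times = G$ with $\drank(H) \geq 2^{r-2}$. Now I would invoke the standard fact that for a finite abelian $p$-group $A$, any subgroup $B \leq A$ satisfies $\drank(B) \leq \drank(A)$ (via, e.g., $\dim_{\F_2}(B/2B) \leq \dim_{\F_2}(A/2A)$). Applied to $H \leq G$, this gives $2^{r-2} \leq \drank(H) \leq \drank(G) = n$, so $|x| = 2^r \leq 4n$, which is the first conclusion.

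For the second assertion, I would observe that any finite abelian $2$-group of rank exactly $n$ is determined up to isomorphism by a partition, namely by a tuple $(a_1, \dots, a_n)$ with $a_1 \geq a_2 \geq \cdots \geq a_n \geq 1$ via $G \cong \prod_{i=1}^n \C_{2^{a_i}}$. The exponent of $G$ equals $2^{a_1}$, which the first part bounds by $4n$; hence each $a_i \leq a_1 \leq 2 + \log_2 n$, and there are only finitely many such tuples.

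There is no real obstacle here — the entire substance is already encapsulated in Theorem \ref{rankprop}(\ref{rankpropeven}). The only thing to notice is that its conclusion, which produces a subgroup of $R^\times$ of large rank, can be compared directly against $\drank(G)$ when $G$ itself is abelian, and this is exactly the ``rank $\leq$ rank'' inequality for subgroups of finite abelian $p$-groups.
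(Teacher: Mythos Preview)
Your proof is correct and matches the paper's intended argument: the paper states this corollary without proof, treating it as immediate from Theorem~\ref{rankprop}(\ref{rankpropeven}), and your write-up simply spells out that deduction. One cosmetic remark: your parenthetical justification of the rank inequality via $\dim_{\F_2}(B/2B) \leq \dim_{\F_2}(A/2A)$ is slightly awkward since the natural map $B/2B \to A/2A$ need not be injective; the cleaner one-liner is $B[2] \subseteq A[2]$, whence $\drank(B) = \dim_{\F_2} B[2] \leq \dim_{\F_2} A[2] = \drank(A)$.
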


\begin{corollary} If $G$ is a finite abelian $2$-group of order $2^m$ that is realizable in characteristic $2$, and if $x \in G$ has order $2^r$, then $2^{r-2}+r - 1 \leq m$.\end{corollary}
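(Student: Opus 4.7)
The plan is to combine Theorem \ref{rankprop}(2) with the structure theorem for finite abelian $2$-groups and the obvious constraint coming from the existence of an element of order $2^r$. First I would dispense with the small cases $r \in \{0,1,2\}$ by inspection: for $r \le 1$ the right-hand side $2^{r-2}+r-1$ is non-positive or fractional, and for $r = 2$ it is $2$, which is forced just by $|x| = 4 \le 2^m$. So the interesting regime is $r \ge 3$, which is exactly where Theorem \ref{rankprop}(2) has teeth.

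For $r \ge 3$ (so the hypothesis $2^r \ge 2^2$ of Theorem \ref{rankprop} is satisfied), I would apply that theorem to $R^\times = G$ to produce a finite abelian subgroup $H \le G$ with $\drank(H) \ge 2^{r-2}$. The one step worth spelling out is that because $G$ is itself an abelian $2$-group, the rank of any subgroup is at most the rank of the ambient group (both equal the $\F_2$-dimension of the $2$-torsion, and $H[2] \subseteq G[2]$). Hence $\drank(G) \ge 2^{r-2}$.

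Now I would write the canonical decomposition $G \cong \bigoplus_{i=1}^{k} \C_{2^{b_i}}$ with $b_1 \ge b_2 \ge \cdots \ge b_k \ge 1$. The rank bound gives $k \ge 2^{r-2}$, while the existence of $x \in G$ of order $2^r$ forces $b_1 \ge r$. Then
\[ m \;=\; \sum_{i=1}^{k} b_i \;\ge\; b_1 + (k-1) \;\ge\; r + 2^{r-2} - 1, \]
which is exactly the desired inequality. I do not foresee a real obstacle; the proof is essentially a bookkeeping exercise on top of Theorem \ref{rankprop}(2). The only mildly subtle point is that the subgroup supplied by that theorem must be regarded as sitting inside the abelian group $G$, so that its rank genuinely constrains the rank of $G$ itself and can be combined with the contribution of the largest cyclic factor coming from $x$.
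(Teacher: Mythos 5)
Your proof is correct and follows essentially the same route as the paper: the paper likewise invokes Theorem \ref{rankprop} to get $\drank(G) \geq 2^{r-2}$ and observes that one summand of order at least $2^r$ leaves at most $m-r$ remaining summands, giving $m - r + 1 \geq \drank(G) \geq 2^{r-2}$. Your extra care with the small cases $r \le 2$ and with the fact that a subgroup's rank bounds the ambient group's rank from below only makes explicit what the paper leaves implicit.
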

\begin{proof}
Take $G$ and $x$ as in the statement of the corollary. There must be one summand of $G$ of order at least $2^r$, and the total number of remaining summands is at most $m - r$. Hence, $m - r + 1 \geq \drank(G) \geq 2^{r-2}$ by Theorem \ref{rankprop}.
\end{proof}

\begin{ex}
The last corollary says, for example, that if you are interested in which groups of order at most $2^{11} = 2048$ are realizable, then you need only consider finite abelian $2$-groups with summands of size at most $16$.
\end{ex}

If $R$ is a ring with a finite group of units and $I$ is a two-sided ideal contained in the Jacobson radial of $R$, then the quotient map $R \longrightarrow R/I$ induces a surjective group homomorphism $R^\times \longrightarrow (R/I)^\times$ (see \cite[2.6]{cl-4}). This fact has the following nice consequence, since the $2^k$-power map is a ring homomorphism for commutative rings of characteristic 2. Let $G_{2^k} = \{ x^{2^k} \, | \, x \in G\}$.

\begin{proposition} \label{powertrick}
If $G$ is a finite abelian group that is realizable in characteristic $2$, then $G_{2^k}$ is realizable in characteristic $2$ for all $k.$
\end{proposition}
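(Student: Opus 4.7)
The plan is to use the Frobenius endomorphism. By Proposition \ref{wlogRfinite}, since $G$ is abelian, we may fix a finite commutative ring $R$ of characteristic $2$ with $R^\times = G$. In such a ring, the map $\map{F}{R}{R}$ given by $F(r) = r^{2^k}$ is a ring endomorphism (additivity comes from iterating the Frobenius identity $(a+b)^2 = a^2 + b^2$ valid in characteristic $2$, and multiplicativity from commutativity). Hence its image $S = F(R)$ is a unital subring of $R$; since $S$ contains $1 = F(1)$ and $2 \cdot 1 = 0$ in $R$, the ring $S$ also has characteristic $2$.

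Let $I = \ker F = \{r \in R : r^{2^k} = 0\}$, an ideal consisting of nilpotent elements, so $I$ is contained in the Jacobson radical of $R$. By the first isomorphism theorem, $S \cong R/I$, and by the fact cited just before the proposition there is a surjective group homomorphism $R^\times \twoheadrightarrow (R/I)^\times \cong S^\times$ induced by the quotient. Under the isomorphism $R/I \cong S$, this surjection is precisely the restriction of $F$ to $R^\times$, whose image is $\{x^{2^k} : x \in G\} = G_{2^k}$. Therefore $S^\times = G_{2^k}$, and $S$ realizes $G_{2^k}$ over a ring of characteristic $2$.

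The only point requiring real care is showing that the induced map $R^\times \to S^\times$ is onto, since the inclusion $G_{2^k} \subseteq S^\times$ is automatic but surjectivity is what turns $S^\times$ into exactly $G_{2^k}$; the Jacobson-radical lemma handles it cleanly. If one prefers to bypass that lemma, the direct argument is: given $y \in S^\times$ with inverse $w^{2^k} \in S$, write $y = x^{2^k}$ for some $x \in R$; then $(xw)^{2^k} = y \cdot y^{-1} = 1$ in $R$, so $xw$ is a unit of $R$ (its own $(2^k-1)$-th power is an inverse), and commutativity of $R$ forces $x \in R^\times = G$, whence $y = x^{2^k} \in G_{2^k}$.
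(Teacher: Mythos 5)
Your proof is correct and follows essentially the same route as the paper: both pass to a finite commutative ring via Proposition \ref{wlogRfinite}, observe that the $2^k$-power map is a ring endomorphism with nilpotent kernel $I \subseteq J(R)$, and invoke the surjectivity of $R^\times \to (R/I)^\times$ to identify the unit group of the quotient (equivalently, of the image $S \cong R/I$) with $G_{2^k}$. The extra direct argument for surjectivity is a nice bonus but does not change the approach.
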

\begin{proof} Let $R$ be a ring of characteristic 2 with finite group of units $G$; without loss of generality, we may assume $R$ is commutative. The kernel $I$ of the map $x \mapsto x^{2^k}$ consists of nilpotent elements, so $I$ is contained in the Jacobson radical. Thus $(R/I)^\times \cong G_{2^k}$.
\end{proof}

If $G$ and $H$ are groups with $H \cong G_{2^k}$ for some non-negative integer $k$, then we will say that {\bf $G$ powers down to $H$}. The value of the above proposition is that it implies that any group that powers down to a group that is not realizable is itself not realizable.

\begin{corollary} \label{rankdifference}
If $H$ is a finite abelian $2$-group whose every element has order at most $2^r$, then $\C_{2^{r+3}} \times H$ is not realizable in characteristic $2$.
\end{corollary}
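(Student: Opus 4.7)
The plan is to apply Proposition \ref{powertrick} (the powering-down trick) with $k = r$ to reduce the corollary to a known non-realizability fact, namely that $\C_8$ is not realizable in characteristic $2$ (stated at the start of \S \ref{char2}, citing \cite{cl-3}).

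Concretely, set $G = \C_{2^{r+3}} \times H$ and consider $G_{2^r} = \{g^{2^r} : g \in G\}$. Since powering is applied componentwise, $G_{2^r} = (\C_{2^{r+3}})_{2^r} \times H_{2^r}$. The first factor is the unique subgroup of $\C_{2^{r+3}}$ of order $2^{(r+3)-r} = 2^3$, hence is $\C_8$. By hypothesis, every element of $H$ has order dividing $2^r$, so $H_{2^r}$ is trivial. Therefore $G_{2^r} \cong \C_8$.

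If $G$ were realizable in characteristic $2$, then by Proposition \ref{powertrick} so would $G_{2^r} \cong \C_8$ be, contradicting the fact that $\C_8$ is not realizable in characteristic $2$. Hence $G$ is not realizable in characteristic $2$, proving the corollary.

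There is no real obstacle here: the statement is essentially designed as a direct corollary of Proposition \ref{powertrick}, and the only calculation needed is the observation about how the $2^r$-th power map kills all of $H$ while collapsing $\C_{2^{r+3}}$ onto its unique copy of $\C_8$. The main conceptual point to record cleanly is that Proposition \ref{powertrick} applies in the abelian setting (which lets us work commutatively), so the powering map is a ring homomorphism and the componentwise computation of $G_{2^r}$ is valid.
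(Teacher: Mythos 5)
Your proof is correct and is essentially the paper's own argument: the paper also powers $G = \C_{2^{r+3}} \times H$ down to $\C_8$ and invokes Proposition \ref{powertrick} together with the non-realizability of $\C_8$ in characteristic $2$. You have simply made explicit the componentwise computation that the paper leaves implicit.
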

\begin{proof}
Any such group $G$ powers down to $\C_8$. Since $\C_8$ is not realizable in characteristic 2, neither is $G$, by Proposition \ref{powertrick}.
\end{proof}

The next two examples and Corollary \ref{rank-limitation} provide the proof of Theorem \ref{rank1-2}.

\begin{ex}[Rank 1 groups] \label{rank1} If $G$ is an indecomposable $2$-group, then it is realizable in characteristic 2 if and only if it is isomorphic to $\C_2$ or $\C_4$ (see \cite{cl-3}).\end{ex}

\begin{ex}[Rank 2 groups] \label{rank2} The rank 2 finite abelian $2$-groups that are realizable in characteristic 2 are $\C_8 \times \C_2$, $\C_4 \times \C_4$, $\C_4 \times \C_2$, and $\C_2 \times \C_2$. Suppose $G = \C_{2^a} \times \C_{2^b}$ with $b \leq a$. First, we know that $a \leq 3$ since the rank of $G$ is 2 and hence $8 \geq 2^a$ by Corollary \ref{rank-limitation}. By Corollary \ref{rankdifference}, we must have $a = b, a = b + 1,$ or $a = b + 2$. Thus, the possible realizable groups of rank 2 are $\C_8 \times \C_8$, $\C_8 \times \C_4$, $\C_8 \times \C_2$, $\C_4 \times \C_4$, $\C_4 \times \C_2$, and $\C_2 \times \C_2$. The last three groups are realizable by Example \ref{rank1}. We also have $\C_8 \times \C_2 = \A_5  \cong (\F_2[x]/(x^5))^\times.$ Neither $\C_8 \times \C_4$ nor $\C_8 \times \C_8$ is realizable by Proposition \ref{c48} below.
\end{ex}

\begin{proposition} \label{c48}
The groups $\C_8 \times \C_4$ and $\C_8 \times \C_8$ are each not the group of units of a ring of characteristic $2$.
\end{proposition}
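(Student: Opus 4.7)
The plan is to reduce to the case where $R$ is a local $\F_2$-algebra with residue field $\F_2$, and then derive an explicit algebraic contradiction from the structure of the subring generated by a unit of order $8$.

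By Proposition~\ref{wlogRfinite} we may take $R$ to be finite commutative of characteristic $2$ and write $R = \prod R_i$ as a product of finite local rings. The surjection $R_i^\times \twoheadrightarrow \F_{2^{d_i}}^\times$, combined with the hypothesis that $R^\times$ is a $2$-group, forces each residue field to be $\F_2$. A key preliminary is that no such local ring $S$ can have $S^\times \cong \C_8$: a generator $g = 1 + y$ of $\C_8$ would have $y^4 \neq 0$ and $y^8 = 0$, so $\F_2[y] \cong \F_2[x]/(x^k)$ for some $k \in \{5,6,7,8\}$, forcing $|S^\times| \geq |\A_k| \geq |\A_5| = 16$. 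The only nontrivial proper direct factors of $\C_8 \times \C_4$ (resp.\ $\C_8 \times \C_8$) are $\C_4, \C_8$ (resp.\ $\C_8$), and $\C_8$ is excluded, so $G$ itself must be a single local factor; we may therefore assume $R$ is local with $R^\times = G$.

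Next, we would pick generators $g_i = 1 + y_i$ of $G$ with $|g_1| = 8$ and $|g_2| \in \{4, 8\}$ and replace $R$ by the subring $\F_2[y_1, y_2]$, which remains local with the same unit group. The subring/rank argument applied to $g_1$, together with $\drank(G) = 2$, forces $\F_2[y_1] \cong \F_2[x]/(x^5)$, so $y_1^5 = 0$ and $y_1^4 \neq 0$. Set $z_2 = y_2^{|g_2|/2}$. The square-zero ideal $\mathfrak{m}_2 := \{y \in \mathfrak{m} : y^2 = 0\}$ has $\F_2$-dimension $2$ (since $|\mathfrak{m}_2| = |G[2]| = 4$) and is spanned by the linearly independent elements $y_1^4$ and $z_2$. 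Since $h = 1 + y_1^3$ has multiplicative order $2$, we have $y_1^3 \in \mathfrak{m}_2$; writing $y_1^3 = a y_1^4 + b z_2$ and multiplying by $y_1$ yields $y_1^4 = b \cdot y_1 z_2$, which forces $b = 1$.

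For $G = \C_8 \times \C_8$, applying the same argument to $y_2^3$ yields $y_2^4 = y_2 y_1^4$, and then $y_1^4 = y_1 y_2^4 = y_1 y_2 \cdot y_1^4 = y_1^5 y_2 = 0$ contradicts $y_1^4 \neq 0$. The hard part will be the asymmetric case $G = \C_8 \times \C_4$, where $|g_2| = 4$ rules out any parallel statement about $y_2^3$. There $z_2 = y_2^2$ and $y_1^4 = y_1 y_2^2$, so $(y_1 y_2)^2 = y_1 \cdot y_1 y_2^2 = y_1^5 = 0$ places $y_1 y_2$ in $\mathfrak{m}_2$. The plan is to write $y_1 y_2 = e y_1^4 + f y_2^2$ and extract consequences by repeated multiplication: multiplying by $y_1$ twice should give $y_1^3 y_2 = 0$ and $y_1^4 y_2 = 0$, while multiplying by $y_2$ should give $y_1^4 = y_1 y_2^2 = f y_2^3$, so $f = 1$ and $y_1^4 = y_2^3$. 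Finally, using $y_2^2 = y_1^3 + a y_1^4$, the computation $y_2^3 = y_2 \cdot y_2^2 = y_1^3 y_2 + a y_1^4 y_2 = 0$ collapses the whole chain, yielding $y_1^4 = y_2^3 = 0$ --- the final contradiction.
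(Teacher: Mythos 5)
Your proof is correct, and while it runs on the same engine as the paper's argument, the packaging is different enough to be worth comparing. Both proofs isolate a unit $g_1 = 1+y_1$ of order $8$, use the rank bound on $\A_k = (\F_2[x]/(x^k))^\times$ to force $y_1^5 = 0$ and $y_1^4 \neq 0$, observe that $1+y_1^3$ and $1+y_1y_2$ are $2$-torsion, and multiply the resulting relations until $y_1^4 = 0$ drops out. What you do differently: you first reduce to a local $\F_2$-algebra with residue field $\F_2$ (the paper instead maps out of $\F_2[x,y]/(x^5,y^4)$ and cites \cite{cl-3} for the non-realizability of $\C_8$, which you reprove in the local setting via $|\A_k|\geq 16$); you then treat the square-zero part $\mathfrak{m}_2$ of the maximal ideal as a $2$-dimensional $\F_2$-vector space with basis $\{y_1^4, z_2\}$ and express each $2$-torsion relation as a linear combination with undetermined coefficients. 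This replaces the paper's explicit four-case analysis of the image of $1+xy$ with one uniform computation --- the four cases are exactly the four values of $(e,f)$ --- which is arguably cleaner; the basis claim just needs the (true) observation that $g_1^4$ and $g_2^{|g_2|/2}$ are distinct nontrivial elements of $G[2]$, which you should state. For $\C_8\times\C_8$ your route is genuinely slicker: the symmetric pair of relations $y_1^4 = y_1y_2^4$ and $y_2^4 = y_1^4y_2$ collapses immediately to $y_1^4 = y_1^5y_2 = 0$, with no need for the auxiliary element $1+xy^2$ or any case analysis. All the individual computations check out ($y_1^2y_2^2 = y_1\cdot y_1^4 = 0$, $y_1^3y_2 = y_1^4y_2 = 0$, $f=1$, $y_2^3 = y_1^4$, and finally $y_2^3 = y_1^3y_2 + ay_1^4y_2 = 0$), so the contradiction is sound.
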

\begin{proof}
Assume to the contrary that there is a ring of characteristic 2 whose group of units is isomorphic to $\C_8 \times \C_4$. Pick $g \in R^\times$ of order $8$. There is a ring homomorphism $\map{\phi}{\F_2[x]/(x^8)}{R}$ sending $1 + x$ to $g$. Since $\drank(\A_k) \geq 3$ for $k \geq 6$ and $\drank(R^\times) = 2$, we must have $x^5 \in \ker \phi$. So in fact there is a ring homomorphism $\map{\phi}{\F_2[x]/(x^5)}{R}$ sending $1 + x$ to $g$.  Further, we must have $x^4 \not \in \ker \phi$ for otherwise $g$ would have order dividing 4.

Now, $\A_5 \cong \cyclic{x+1} \times \cyclic{x^3 + 1} \cong \C_8 \times \C_2$. Since $\C_8$ is not realizable in characteristic 2, $1 + x^3$ must map to an element of order 2 in $R^\times$ that is not contained in the cyclic subgroup generated by $g$. Hence, $x^3 + 1$ maps to an element $h^2 \in R^\times$ where $h$ has order 4 and $R^\times \cong \cyclic{g} \times \cyclic{h}$. We may now extend $\phi$ to a homomorphism $\map{\phi}{\F_2[x, y]/(x^5, y^4)}{R}$ sending $1 + x$ to $g$ and $1+y$ to $h$. The kernel of $\phi$ must contain the relation $x^3 + 1 = y^2 + 1$ which is equivalent to $x^3 = y^2$.

Next, note that, modulo the kernel of $\phi$, $(1 + xy)^2 = 1 + x^2y^2 = 1 + x^5 = 1$. This means $1 + xy$ maps to a unit of order 1 or 2 (1, $g^4$, $h^2$, or $g^4 h^2$). This forces one of the following relations to hold modulo the kernel of $\phi$:
\[
\begin{aligned}
(1) \,\,\, xy & = 0 \implies x^4 = xx^3 = xy^2 = (xy)y = (0)y = 0 \\
(2) \,\,\, xy & = x^4 = xy^2 \implies xy(1 + y) = 0 \implies xy = 0 \implies x^4 = 0\\
(3) \,\,\, xy & = y^2 = x^3 \\
(4) \,\,\, xy & = 1 + (x^4 + 1)(y^2 + 1) \\
&= y^2 + x^4 + x^4y^2 \\
&= y^2 + xy^2 + x^4y^2 \\
&= y^2(1 + x + x^4) = x^3(1 + x + x^4) = x^3(1+x)
\end{aligned}
\]
\n The last two cases imply that $xy = x^3 u$ for some unit $u$ (modulo the kernel of $\phi$). Thus, in both cases, \[ x^4 = xy^2 = (xy)y = (x^3u)y = (xy)x^2u = (x^3u)x^2u = x^5u^2 = 0.\] Now we see that all four cases imply $x^4 = 0$ modulo the kernel of $\phi$, so no such ring $R$ exists.

The proof that $\C_8 \times \C_8$ is not realizable is similar. Assume to the contrary that there is a ring $R$ with $R^\times \cong \cyclic{g}\times \cyclic{h} \cong \C_8 \times \C_8.$ Then, there is a ring homomorphism $\map{\phi}{\F_2[x,y]/(x^5, y^5)}{R}$ sending $1+x$ to $g$ and $1 + y$ to $h$ such that $x^3 = y^4$ modulo the kernel of $\phi$, and $x^4 \not \in \ker \phi$. Further, the element $1 + xy^2$ maps to a unit of order 1 or 2. Considering four possible cases as above, one finds that this forces $x^4 = 0$ modulo the kernel of $\phi$, a contradiction.
\end{proof}

\begin{ex}[Groups that power down to $\C_8 \times \C_4$ or $\C_8 \times \C_8$] No such group is realizable by Proposition \ref{c48} and Proposition \ref{powertrick}. This eliminates some groups that are not eliminated by Theorem \ref{rankprop}; for example, $\C_{16} \times \C_8 \times \C_2^2$ and $\C_{16} \times \C_{16} \times \C_2^2$ are rank 4 groups which are not realizable in characteristic 2.
\end{ex}

\begin{question}[Rank 3 groups] Arguing as in Example \ref{rank2}, we may narrow down the list of possibilities here. Removing groups we know are realizable as direct products of previous cases, we have the following list of rank 3 groups with unknown status: $\C_8 \times \C_8 \times \C_8, \C_8 \times \C_8 \times \C_4, \C_8 \times \C_8 \times \C_2, \C_8 \times \C_4 \times \C_4.$ Which, if any, of these groups are realizable in characteristic 2? The latter two groups are the only finite abelian $2$-groups of at most order 128 with unknown status.
\end{question}

Regarding nonabelian $2$-groups, we conclude with two examples that allow one to generate realizable nonabelian $2$-groups over rings of characteristic 2.

\begin{ex}[Units in $\U_n(\mathbf{F}_2)$]
Consider the ring $\U_n(\F_2)$ of upper triangular matrices over the field of two elements $\F_2$. A matrix in this ring (which has characteristic $2$ and is non-commutative when $n > 2$)  is invertible if and only if all of its diagonal entries are 1. So the group of units is nonabelian and has $2^{(1+2+3+...+n-1)} = 2^{(n(n-1)/2)}$ elements.
\end{ex}

\begin{ex}[Units in group algebras] We begin with a proposition.

\begin{proposition}
 For any prime $p$ and any finite $p$-group $G$, in the group ring $\mathbf{F}_pG$ we have $|(\mathbf{F}_pG)^\times| = (p-1) p^{(|G|-1)}$.
\end{proposition}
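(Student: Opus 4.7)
The plan is to show that the group algebra $\mathbf{F}_p G$ is a local ring whose unique maximal ideal is the augmentation ideal $I = \ker(\epsilon)$, where $\map{\epsilon}{\mathbf{F}_p G}{\mathbf{F}_p}$ is the $\mathbf{F}_p$-algebra map sending every $g \in G$ to $1$. Since $\epsilon$ is surjective with $\mathbf{F}_p G / I \cong \mathbf{F}_p$, we have $|I| = p^{|G| - 1}$. Once $I$ is identified as the set of non-units, the unit count is immediate: $|(\mathbf{F}_p G)^\times| = p^{|G|} - p^{|G|-1} = (p-1)p^{|G|-1}$.

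The main content is therefore to prove that $I$ is nilpotent, for then every element outside $I$ must be a unit (its augmentation is a unit of $\mathbf{F}_p$, so one can invert modulo the nilpotent ideal in the standard way), and every element of $I$ is a non-unit (nilpotent ideals contain no units). I would argue by induction on $|G|$. In the base case $|G| = 1$, the ideal $I$ is zero. For the inductive step, choose a central subgroup $H \leq G$ of order $p$, which exists because $G$ is a nontrivial $p$-group, and let $\bar{G} = G/H$. The quotient homomorphism $G \twoheadrightarrow \bar{G}$ extends to a surjective ring map $\map{\pi}{\mathbf{F}_p G}{\mathbf{F}_p \bar{G}}$, and by the inductive hypothesis the augmentation ideal $\bar{I}$ of $\mathbf{F}_p \bar{G}$ satisfies $\bar{I}^N = 0$ for some $N$. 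Hence $I^N \subseteq \ker \pi$.

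The key observation is then that $\ker \pi$ is itself nilpotent. Writing $H = \cyclic{h}$, one checks that $\ker \pi$ equals the two-sided ideal generated by $h-1$; because $h$ is central in $G$, this ideal is simply $(h-1)\mathbf{F}_p G$, and since $(h-1)^p = h^p - 1 = 0$ in characteristic $p$, we get $(\ker \pi)^p = 0$. Combining these gives $I^{Np} = 0$, completing the induction. I expect this nilpotence step — in particular, the need to invoke a central element of order $p$ so that $(h-1)^p = 0$ and so that the ideal generated by $h - 1$ is principal — to be the only nonroutine part of the argument; everything else is bookkeeping with the augmentation map.
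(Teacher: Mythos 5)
Your proposal is correct and follows the same route as the paper: identify $\mathbf{F}_pG$ as a local ring whose unique maximal ideal is the augmentation ideal $I$, and then count the complement of $I$ to get $p^{|G|}-p^{|G|-1}=(p-1)p^{|G|-1}$. The only difference is that the paper simply cites the locality of $\mathbf{F}_pG$ as well known, whereas you supply a complete and correct proof of it (nilpotence of $I$ by induction on $|G|$, using a central subgroup $\langle h\rangle$ of order $p$ so that $\ker\pi=(h-1)\mathbf{F}_pG$ and $(h-1)^p=h^p-1=0$).
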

\begin{proof}  It is well known that $\mathbf{F}_pG$ is a local ring (the augmentation ideal is the unique maximal ideal). Therefore every unit in $\mathbf{F}_pG$ has a unique (because we are working over a field) representation of the form  $u + x$  where $u$ is a unit in $\mathbf{F}_p$ and $x$ is an element in the augmentation ideal of $\mathbf{F}_pG$.   There are $p-1$ choices for $u$. 
Since the sum of the coefficients $t_g$  of  any element $\sum_g  t_g g$ in the augmentation ideal has to be zero,  the first $|G|-1$ coefficients can be filled with any  element of $\mathbf{F}_p$ and then the last coefficient will be determined.  So the number of choices for $x$ is $p^{(|G|-1)}$. This means the total number of units in $\mathbf{F}_pG$ is  $(p-1)p^{(|G|-1)}$
\end{proof}

In particular, for any finite 2-group $G$, $(\F_2G)^\times$ is a 2-group of order $2^{|G|-1}$ containing $G$ as a subgroup. The smallest nonabelian $2$-groups are $\Dih_8$ and $\Quat_8$. The unit groups of $\mathbf{F}_2\Dih_8$ and $\mathbf{F}_2\Quat_8$ are nonabelian (because they contain $\Dih_8$ and $\Quat_8$ as subgroups) $2$-groups of order $2^7$.
\end{ex}

\section{Rings of characteristic $2^n\, (n > 1)$} \label{char2n}

In this section, we briefly report some of what is known about the realizability of abelian $2$-groups in characteristic $2^n$ (for $n > 1$). An obvious ring of characteristic $2^n$ to consider is $\Z_{2^n}$. Using this ring and its modules, we may easily construct rings of characteristic $2^n$ with unit groups that are finite abelian $2$-groups using Proposition \ref{matrixtrick}. Recall that the exponent $\mathrm{exp}(G)$ of a finite group $G$ is the least common multiple of the orders of its elements. The realizability of the unit groups below is already known; for example, they are precisely the $2$-groups entailed by \cite[4.8]{ddx}.
\begin{proposition} \label{2ngroups}
Let $Q$ be a finite abelian $2$-group of exponent $2^m$. For any integer $n \geq \max\{m, 2\}$, the group \[ \C_2 \times \C_{2^{n-2}} \times Q\] is realizable in characteristic $2^{n}$.
\end{proposition}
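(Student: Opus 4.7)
The plan is direct: apply the construction in Proposition \ref{matrixtrick} with $R = \Z_{2^n}$ and $H = Q$, viewed as a $\Z_{2^n}$-bimodule.

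First, I would note that $Q$, being a finite abelian $2$-group of exponent $2^m$, is a $\Z$-module annihilated by $2^m$, hence a $\Z_{2^m}$-module. Since $n \geq m$, the quotient map $\Z_{2^n} \twoheadrightarrow \Z_{2^m}$ lets us regard $Q$ as a $\Z_{2^n}$-module. As $\Z_{2^n}$ is commutative, the left and right $\Z_{2^n}$-actions agree, so $Q$ becomes a $\Z_{2^n}$-bimodule satisfying $rh = hr$.

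Now I would form the ring $M(\Z_{2^n}, Q)$ of Proposition \ref{matrixtrick}. By part (\ref{samechar}), this ring has the same characteristic as $\Z_{2^n}$, namely $2^n$. By part (\ref{communitgroup}), since $\Z_{2^n}$ is commutative and the bimodule action is symmetric,
\[
M(\Z_{2^n}, Q)^\times \;\cong\; \Z_{2^n}^\times \times (Q, +).
\]
Finally, the standard description of the unit group of $\Z_{2^n}$ gives $\Z_{2^n}^\times \cong \C_2 \times \C_{2^{n-2}}$ whenever $n \geq 2$ (with the convention $\C_{2^0} = \C_1$ when $n = 2$). Combining these isomorphisms yields
\[
M(\Z_{2^n}, Q)^\times \;\cong\; \C_2 \times \C_{2^{n-2}} \times Q,
\]
as required.

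There is essentially no obstacle here: the hypothesis $n \geq m$ is exactly what is needed to endow $Q$ with a $\Z_{2^n}$-module structure, and the hypothesis $n \geq 2$ is what makes the cyclic factorization of $\Z_{2^n}^\times$ take the stated form. All the work is done by Proposition \ref{matrixtrick}.
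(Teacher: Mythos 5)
Your proposal is correct and is essentially the paper's own proof: both regard $Q$ as a $\Z_{2^n}$-module via $n \geq m$ and apply Proposition \ref{matrixtrick} to $M(\Z_{2^n}, Q)$, using $\Z_{2^n}^\times \cong \C_2 \times \C_{2^{n-2}}$ for $n \geq 2$. Your write-up just spells out the bimodule and characteristic details more explicitly.
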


\begin{proof}
Since $n \geq m$, we have that $Q$ is an $\Z_{2^n}$-module. By Proposition \ref{matrixtrick}, \[ M(\Z_{2^n}, Q)^\times = \Z_{2^n}^\times \times Q = \C_2 \times \C_{2^{n-2}} \times Q\] and $\mathrm{char}\, M(\Z_{2^n}, Q) = \mathrm{char}\, \Z_{2^n} = 2^n$.
\end{proof}

If one is willing to give up the requirement that the realizing ring be local (e.g., if one is only interested in the general question of whether a group is the group of units in a finite ring of characteristic a power of 2), then we may use the above method to recover all the unit groups in \cite[4.8]{ddx}. To see why, let $P$ be a finite abelian $2$-group of exponent $2^a$, suppose $a_0 \geq a - 1$, and let $Q = (\C_{2^{a_0}})^{\lambda - 1} \times P^\lambda$. Now, $Q$ is a $\Z_{a_0 + 1}$-module and
\[
\begin{aligned} (\F_{2^\lambda} \times M(\Z_{2^{a_0+1}}, Q))^\times & \cong \F_{2^\lambda}^\times \times M(\Z_{2^{a_0+1}}, Q)^\times \\ &= \F_{2^\lambda}^\times \times \C_2 \times \C_{2^{a_0-1}} \times (\C_{2^{a_0}})^{\lambda - 1} \times P^\lambda.\end{aligned}\] Note that $\F_{2^\lambda} \times M(\Z_{2^{a_0+1}}, Q)$ has characteristic $2^{a_0 + 1}$. Further, the unit group in \cite[\S 5 Example 3]{ddx} is the group of units in the ring \[ \F_4 \times M(\Z_{2^n}, \C_2^2 \times \C_4 \times \C_{2^{n-1}})\] of characteristic $2^n$ (where $n \geq 2$).

Another family of examples may be obtained by taking certain quotients of the Gaussian integers. The quotient ring $\Z[i]/(1+i)^n$ has characteristic $2^n$ and, for $n \geq 5$,
\[ (\Z[i]/(1+i)^n)^\times \cong \begin{cases} \C_{2^{m-1}} \times \C_{2^{m-2}} \times \C_4& \text{ if $n = 2m$}\\ \C_{2^{m-1}} \times \C_{2^{m-1}} \times \C_4 & \text{ if $n = 2m+1.$}\end{cases}\]
For $n = 1, 2, 3, 4$ one obtains $\C_1, \C_2, \C_4, \C_2 \times \C_4$. These unit groups are of course well known; see, for example, \cite{cross}.

%\begin{remark} The results \cite[3.1, 3.2]{ddx}, when narrowed to the class of $p$-groups, include the realizability of two families (and direct products of their members) that are already well known. The first is the family  $\C_2 \times \C_{2^a} = \Z_{2^{a+2}}^\times$ for $a \geq 0$. The second is the family of groups $\F_{q^r}^\times$, when $\F_{q^r}$ is a finite field and $\F_{q^r}^\times$ has prime power order. An actual characterization of the finite fields whose multiplicative groups have prime power order is a corollary to Catalan's Conjecture (a direct argument may be found in \cite[2.2, 2.3]{cl-2}). \end{remark}

\begin{remark} In this remark, we explain how the results \cite[4.3, 4.8]{ddx} fit neatly into the framework for generating unit groups given by Proposition \ref{matrixtrick}. A {\bf Galois ring of characteristic $p^n$} is a ring of the form \[ R = \Z_{p^n}[t]/(f(t)),\] where $f(t)$ is a monic polynomial with integer coefficients whose reduction modulo $p$ is irreducible of degree $\lambda$. Galois rings are generalizations of finite fields and have been used in the study of finite rings since the late 1960s. Their unit groups are well known; they were computed in \cite[Theorem 9]{rag}: for $n \geq 2$, \[
R^\times \cong \begin{cases} \F_{2^\lambda}^\times \times \C_2 \times \C_{2^{n-2}} \times (\C_{2^{n-1}})^{\lambda - 1} & \text{ if $p = 2$} \\ \F_{p^\lambda}^\times \times \C_{p^{n-1}}^\lambda & \text{ if $p > 2$.} \end{cases}\] 
For $1 \leq k \leq n$, $R/(p^k)$ is an $R$-module that is isomorphic to $\C_{p^k}^\lambda$ as an abelian group. Hence, if $G$ is a finite abelian $p$-group of exponent at most $2^n$, then $G^\lambda$ is an $R$-module. Proposition \ref{matrixtrick} now implies the following, for any $\lambda \geq 1$:
\begin{enumerate}
\item If $p = 2$, $T$ is a finite abelian $2$-group of exponent $2^m$, and $n \geq m$, then \[ M(R, T^\lambda)^\times \cong \F_{2^\lambda}^\times \times \C_2 \times \C_{2^{n-2}} \times (\C_{2^{n-1}})^{\lambda - 1} \times T^\lambda.\] \label{ddxn1}
\vspace{-.2in}
\item If $p > 2$, $D$ is a finite abelian $p$-group of exponent $2^m$, and $n \geq m$, then \[ M(R, D^\lambda)^\times \cong \F_{p^\lambda}^\times \times \C_{p^{n-1}}^\lambda \times D^\lambda.\] \label{ddxn2}
\end{enumerate}
\vspace{-.2in}
Item (\ref{ddxn1}) is \cite[4.8]{ddx}, and item (\ref{ddxn2}) is equivalent to \cite[4.3]{ddx}: given a finite abelian $p$-group $P$ of exponent $p^m$, we may write $P = \C_{p^{m}} \times D$ for some finite abelian $p$-group $D$ with $\mathrm{exp}(D) \leq p^m$. Now take $n = m +1$ above, and $M(R, D^\lambda)^\times \cong \F_{p^\lambda}^\times \times \C_{p^{m}}^\lambda \times D^\lambda \cong \F_{p^\lambda}^\times \times P^\lambda.$
\end{remark}

\section{Rings of characteristic 0} \label{char0sec}

In this section, we summarize what we know about the realizability of finite abelian $2$-groups in characteristic zero. As mentioned in \S \ref{oddcharsec}, we have $\Z^\times \cong \C_2$ and $\Z[i]^\times \cong \C_4$. This means we may use Proposition \ref{matrixtrick} to generate examples with either $\C_2$ or $\C_4$ as a summand.

\begin{ex}[Groups of type $\C_2 \times P$] \label{mt1} Using Proposition \ref{matrixtrick}, we already know that $\C_2 \times P$ is the group of units of a ring of characteristic zero for any abelian $2$-group $P$.
\end{ex}

Recall from the introduction that, given a finite abelian $2$-group $B$, $B_2 = \{x^2 \, | \, x \in B\}$.

\begin{ex}[Groups of type $\C_4 \times P$] \label{mt2} Using Proposition \ref{matrixtrick}, coupled with the fact that $\Z[i]^\times \cong \C_4$, we may realize any abelian $2$-group of the form $\C_4 \times P$ when $P$ is a $\Z[i]$-module which happens to be abelian $2$-group. Since the Gaussian integers are a PID, all finitely generated modules are sums of modules of the form $\Z[i]/(a^k)$ where $a$ is a Gaussian prime. The only such quotients that are $2$-groups under addition are the modules $\Z[i]/(1+i)^k$, isomorphic to $\C_{2^m} \times \C_{2^m}$ if $k = 2m$ and isomorphic to $\C_{2^{m+1}} \times \C_{2^m}$ if $k = 2m + 1$. Thus, $P$ may be any direct sum of copies of groups of the form $\C_{2^m} \times \C_{2^m}$ and $\C_{2^{m+1}} \times \C_{2^m}$, where $m$ may be any positive integer. Grouping factors, we obtain that $\C_4 \times A \times A \times B \times B_2$ is realizable in characteristic zero for any pair of finite abelian $2$-groups $A$ and $B$. These examples also appear in \cite{dd} (all of the realizable groups in Proposition 5.8 of that paper may also be obtained using Proposition \ref{matrixtrick} and the well-known description of the modules $\Z[i]/(a^k)$ for $a$ a Gaussian prime).
\end{ex}

\begin{question}[The group $\C_4 \times \C_{32}$] So far as the authors are aware, it is unknown whether the group $\C_4 \times \C_{32}$ is the group of units in a ring. (This is the smallest abelian $2$-group for which the question is open.) By Example \ref{rank2}, it is not realizable in characteristic 2, and the group of units in a ring of characteristic $2^n$ must contain a subgroup isomorphic to $\C_2 \times \C_{2^{n-2}}$ when $n \geq 2$. We do not know whether it is the group of units in a ring of characteristic 0 or $2^n$ with $2 < 2^n \leq 2^7$.
\end{question}

We next address $2$-groups of the form $\C_8 \times P$. First, we will state and prove a proposition that has been extracted from the proof of \cite[Proposition 2.2]{cl-3}.

\begin{proposition} Let $R$ be a ring whose group of units is a $2$-group.  If $-1 \in R$ has a fourth root, then $R$ has positive characteristic.
\end{proposition}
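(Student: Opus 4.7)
The plan is to exhibit a subring of $R$ isomorphic to $\Z[\sqrt{2}]$, whose unit group contains the fundamental unit $1+\sqrt{2}$ of infinite order; embedding an infinite-order element into $R^\times$ would contradict the hypothesis that $R^\times$ is a (torsion) $2$-group, forcing $R$ to have positive characteristic. Since the construction will be carried out inside the commutative subring $\Z[\alpha]$ generated by a fourth root of $-1$, no commutativity assumption on $R$ will be required.

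First I would form, given $\alpha \in R$ with $\alpha^4 = -1$, the element $s = \alpha - \alpha^3$. Using $\alpha^4 = -1$ (and therefore $\alpha^6 = -\alpha^2$), a direct expansion yields
\[ s^2 \;=\; \alpha^2 - 2\alpha^4 + \alpha^6 \;=\; \alpha^2 + 2 - \alpha^2 \;=\; 2,\]
so $s$ plays the role of a square root of $2$ in $R$. Consequently $(1+s)(1-s) = 1 - s^2 = -1$, so $1+s$ is a unit in $R$ with inverse $s-1$. The subring $\Z[s] \subseteq R$ is the image of the natural surjection $\Z[\sqrt{2}] \cong \Z[x]/(x^2-2) \twoheadrightarrow \Z[s]$ sending $x \mapsto s$.

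Now suppose for contradiction that $R$ has characteristic $0$. Then the prime subring of $R$ is $\Z$, so $\Z \subseteq \Z[s]$, making $\Z[s]$ an infinite ring. On the other hand, every nonzero ideal of $\Z[\sqrt{2}]$ has finite index (for any nonzero $a \in \Z[\sqrt{2}]$, $\Z[\sqrt{2}]/(a)$ has $|N(a)|$ elements), so every nonzero proper quotient of $\Z[\sqrt{2}]$ is finite. Therefore the infinite quotient $\Z[s]$ must in fact be isomorphic to $\Z[\sqrt{2}]$. Under this isomorphism $1+s$ corresponds to the fundamental unit $1+\sqrt{2}$, which has infinite order; hence $(1+s)^n \ne 1$ in $\Z[s] \subseteq R$ for all $n \geq 1$, contradicting that $R^\times$ is a $2$-group.

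The main (and really only nonroutine) step is the last one: ruling out the possibility that $\Z[s]$ collapses to a proper — necessarily finite — quotient of $\Z[\sqrt{2}]$. This is where the characteristic-zero hypothesis is indispensable, since it forces $\Z \subseteq \Z[s]$ and hence $\Z[s]$ to be infinite.
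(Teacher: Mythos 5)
Your argument is correct, and it is genuinely different from the one in the paper. The paper works with the unit $z^2+z+1$ (inverse $1-z^2+z^3$), invokes the torsion hypothesis to get $(z^2+z+1)^{2^k}=1$, and then shows that the ideal $(x^4+1,\,(x^2+x+1)^{2^k}-1)$ of $\Z[x]$ contains a nonzero integer because the two generators are coprime in $\Q[x]$; this is a purely polynomial-arithmetic argument. You instead extract a square root of $2$, namely $s=\alpha-\alpha^3$ (the identity $s^2=2$ checks out, and all computations live in the commutative subring $\Z[\alpha]$, so noncommutativity of $R$ is harmless), observe that $1+s$ is a unit since $(1+s)(s-1)=1$, and then use two facts about $\Z[\sqrt2]$: every nonzero ideal has finite index, and $1+\sqrt2$ has infinite multiplicative order (immediate from $1+\sqrt2>1$ in $\R$, so no appeal to the full unit group or Pell's equation is needed). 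In characteristic $0$ the subring $\Z[s]$ is infinite, hence must be a faithful copy of $\Z[\sqrt2]$, and the infinite-order unit $1+s$ contradicts the torsion hypothesis on $R^\times$. Your route is more conceptual --- it isolates the ``real reason'' the proposition holds, namely that $\Z[\zeta_8]$ contains a real quadratic order with infinite unit group --- and it only uses that $R^\times$ is torsion, not that it is a $2$-group; the paper's route is more self-contained in that it needs nothing beyond coprimality of two explicit polynomials over $\Q$. Both are complete proofs.
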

\begin{proof}
Let $R$ be a ring and let $G = R^\times$ be a $2$-group. Suppose there exists $z \in R$ such that $z^4 + 1 = 0$. This of course implies that $z$ is a unit. The element $z^2 + z + 1$ is also a unit (its inverse is $1 - z^2 + z^3$), so since $G$ is a $2$-group, we have $(z^2 + z + 1)^{2^k} - 1 = 0$ for some positive integer $k$. There is a ring homomorphism $\map{\phi}{\Z[x]}{R}$ sending $x$ to $z$, and the ideal $I = (x^4 + 1, (x^2 + x + 1)^{2^k} - 1)$ is in the kernel of $\phi$. If we can prove that this ideal contains a nonzero integer, then $R$ will have positive characteristic. It is straightforward to show (see \cite[2.2]{cl-3} for the details) that the two given generators of $I$ are relatively prime in $\Q[x]$, and this implies that $I$ contains a nonzero integer. This completes the proof.
\end{proof}

\begin{corollary}
If $G$ is a finite $2$-group that does not have $\C_2$ or $\C_4$ as a summand, then $G$ is not realizable in characteristic zero.
\end{corollary}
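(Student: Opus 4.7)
The plan is to argue by contraposition. Suppose $G = R^\times$ for a ring $R$ with $\mathrm{char}\, R = 0$. Then $-1 \in R$ is a central element of order $2$ in $G$, and the preceding proposition guarantees that no $z \in G$ satisfies $z^4 = -1$.

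First, I would treat the abelian case. Write $G = \prod_{i=1}^{k} \C_{2^{a_i}}$ via the structure theorem, and assume for contradiction that every $a_i \geq 3$. In each factor $\C_{2^{a_i}}$, the unique element of order $2$ is a fourth power---namely, the fourth power of the unique element of order $8$ in that factor. Consequently $G^4$ contains the entire $2$-torsion of $G$, and in particular $-1 \in G^4$, contradicting the proposition. Thus some $a_i \in \{1, 2\}$, yielding a $\C_2$ or $\C_4$ direct summand of $G$.

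For the general (possibly non-abelian) case, my strategy would be to reduce to the abelian one via Proposition \ref{center}, or via Proposition \ref{maximal} applied to a maximal abelian subgroup containing $-1$: in either case, the resulting abelian subgroup of $G$ is itself realizable in characteristic $0$, and the abelian analysis yields a $\C_2$ or $\C_4$ direct factor of it. The main obstacle is to promote such a summand of an abelian subgroup up to a direct factor of $G$ itself. The example of $\Quat_8$---whose center $\langle -1 \rangle \cong \C_2$ is a summand of $Z(\Quat_8)$ but not a direct factor of $\Quat_8$---shows that this promotion cannot be achieved by the preceding proposition alone, and would presumably rely on the structural analysis of realizable nonabelian $2$-groups carried out later in the paper.
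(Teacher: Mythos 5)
Your abelian argument is correct, and it is in substance the paper's own proof with the one nontrivial step actually justified: the paper simply asserts that if $\C_2$ and $\C_4$ are not summands of $G$ then $-1$ must have a fourth root, and your observation --- that in a cyclic $2$-group of order at least $8$ the unique involution is a fourth power, so that in an abelian $2$-group all of whose invariant factors have order at least $8$ every involution, in particular $-1$, lies in $G^4$ --- is exactly the missing justification. (One small slip: a cyclic group of order $2^a$ with $a \geq 3$ contains four elements of order $8$, not one, but any of them does the job.)

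Your hesitation about the nonabelian case is not a defect of your write-up; it points at a genuine gap that is present in the paper's proof as well. The promotion of a $\C_2$ or $\C_4$ summand from $Z(G)$ (or from a maximal abelian subgroup) up to a direct factor of $G$ cannot be carried out, and no later structural result will rescue it, because the statement is false for nonabelian $G$ as literally written: $\Quat_8$ is indecomposable (every nontrivial subgroup contains $-1$), so it has neither $\C_2$ nor $\C_4$ as a direct summand, yet the paper itself realizes $\Quat_8$ in characteristic zero as the unit group of the Lipschitz quaternions in \S\ref{nonabelian}. Equivalently, the key step ``no $\C_2$ or $\C_4$ summand implies $-1 = z^4$ for some $z$'' fails for $\Quat_8$, where every fourth power is trivial. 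The corollary (and the corresponding clause of Theorem \ref{char0main}) must therefore be read with the hypothesis, announced at the opening of the section, that $G$ is abelian; under that hypothesis your argument is a complete proof, and you should not attempt to extend it beyond the abelian case.
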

\begin{proof}
Take $G$ as in the statement of the corollary and assume to the contrary that there is a ring $R$ of characteristic 0 with $R^\times \cong G$. The element $-1 \in R$ is a unit of order 2, and since $\C_2$ and $\C_4$ are not summands of $G$, we must have that there exists an element $z \in G$ with $z^4 + 1 = 0$. This contradicts the conclusion of the proposition above.
\end{proof}

The above results prove Theorem \ref{char0main}. For abelian 2-groups over rings of characteristic 0, it therefore remains to determine which finite abelian $2$-groups of the form $\C_4 \times P$ (where $P$ does not have $\C_2$ as a summand) are realizable.

\section{Nonabelian $2$-groups} \label{nonabelian}

In this section we will consider nonabelian $2$-groups. We will prove a proposition which will give restrictions on the center and cyclic maximal abelian subgroups of a realizable nonabelian $2$-group. We will use this result  to solve Fuchs' problem for almost cyclic $p$-groups  and groups with periodic cohomology.

\begin{proposition} \label{main-nonabelian}
Let $G$ be a finite realizable nonabelian $2$-group. Then:
\begin{enumerate}
\item If $Z(G)$ is cyclic, then $Z(G)$ is isomorphic to $\C_2$ or $\C_4$. \label{main-nonabelian-z}
\item Each cyclic maximal abelian subgroup of $G$ is isomorphic to $\C_2$ or $\C_4$. \label{main-nonabelian-cm}
\end{enumerate}
\end{proposition}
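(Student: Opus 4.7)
My plan is to reduce both parts to a restriction on the characteristic $c$ of the realizing ring $R$. Under the hypothesis that $G$ is a finite realizable nonabelian $2$-group, Theorem~\ref{2fchar} gives $c$ of the form $2^a p_1 \cdots p_k$ with distinct Fermat primes $p_i$. For part~(2), Proposition~\ref{maximal} produces a commutative subring $T \subseteq R$ of characteristic $c$ with $T^\times = H$, so the Corollary following Proposition~\ref{center} (applied to $T$, noting that $H = T^\times$ is trivially its own max abelian subgroup) forces $c \in \{0,2,3,4,6\} \cup \{q, 2q : q > 3 \text{ a Fermat prime}\}$. Part~(1) runs identically, using Proposition~\ref{center} to produce a commutative subring with unit group $Z(G)$.

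The remainder of the argument rules out every $c$ other than those in $\{0, 2, 4\}$; once this is achieved, the same Corollary's table directly yields $H, Z(G) \in \{\C_2, \C_4\}$. If $c$ is a pure odd characteristic (i.e., $c = 3$ or $c = q$ for some odd Fermat prime $q$), then Proposition~\ref{wlogRfinite} reduces to $R$ finite and Proposition~\ref{J0} applies (since $|G|$ is a power of $2$ and $c$ is odd), expressing $R$ as a product of finite fields; but then $R$ is commutative, making $G$ abelian---contradicting the standing hypothesis. If $c = 6$ or $c = 2q$, the mutually orthogonal idempotents in the image of $\Z/c \hookrightarrow R$ split $R \cong R_2 \times R_{\text{odd}}$ with $R_{\text{odd}}$ commutative by Proposition~\ref{J0}. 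The unit group decomposes accordingly, $G = R_2^\times \times R_{\text{odd}}^\times$, with $R_{\text{odd}}^\times$ a nontrivial abelian $2$-group. Since $G$ is nonabelian, $R_2^\times$ must be nonabelian and hence a nontrivial $2$-group with nontrivial center. The factor $R_{\text{odd}}^\times$ lies in $Z(G)$, so every max abelian subgroup of $G$ has the form $M \times R_{\text{odd}}^\times$ with $M$ a max abelian of $R_2^\times$; both factors are nontrivial $2$-groups, so this product fails to be cyclic. This contradicts cyclicity of $H$ in part~(2), and the analogous identity $Z(G) = Z(R_2^\times) \times R_{\text{odd}}^\times$ contradicts cyclicity of $Z(G)$ in part~(1).

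The key technical step---and the only place where a small verification is needed---is the decomposition of the max abelian subgroups and center of $G = A \times B$ when $B$ is abelian. That $B \subseteq Z(G)$ is immediate, and from there one checks that every max abelian of $G$ contains $B$ and hence splits as $M \times B$ for some max abelian $M \leq A$. Combined with the fact that a nontrivial $p$-group always has nontrivial center, this is what forces the contradiction in the mixed-characteristic cases and pins $c$ down to $\{0, 2, 4\}$.
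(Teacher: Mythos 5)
Your proof is correct and rests on the same ingredients as the paper's: Propositions \ref{maximal} and \ref{center} to realize the cyclic subgroup $M$ in the characteristic of $R$, Proposition \ref{J0} to rule out odd prime characteristic via commutativity, the observation that a nontrivial direct product of $2$-groups has noncyclic center (hence noncyclic maximal abelian subgroups) to eliminate the mixed case, and the classification from \cite{cl-3} (packaged in the corollary's table) to land on $\C_2$ or $\C_4$. The only difference is one of ordering: the paper first notes that a cyclic center forces $G$ to be indecomposable and lets Theorem \ref{2fchar} restrict the characteristic to $0$, $2^k$, or a Fermat prime, whereas you restrict the characteristic by applying the corollary to the subring realizing $M$ and then dispose of the residual case $c = 2q$ by splitting the ring into its characteristic-$2$ and odd factors --- the same decomposition argument run in the opposite order.
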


\begin{proof} We begin with the observation that if the center of a $2$-group $G$ is cyclic, then $G$ is indecomposable; this follows from the fact that nontrivial $p$-groups have nontrivial centers. Since the center sits inside 
every maximal abelian subgroup, we may conclude that if a $2$-group $G$ has a cyclic maximal abelian subgroup, then $G$ is indecomposable.

We will prove (A) and (B) simultaneously. Let $M$ be either the center or a cyclic maximal abelian subgroup of $G$. Note that $M \ne G$ since $G$ is nonabelian. 
Moreover, by the observation made above,   $G$ has to be indecomposable. Let $R$ be a finite ring such that $R^\times = G$. 
Since $G$ is indecomposable,  Theorem  \ref{2fchar} implies that  the characteristic of $R$ has to be either $0$, $2^k$ for some $k$, or a Fermat prime.  The Fermat prime case is ruled out by Proposition \ref{J0} since $G$ is a nonabelian indecomposable group. By Propositions \ref{maximal} and \ref{center},  the cyclic $2$-group $M$ is also realizable in characteristic $0$ or $2^k$. By \cite{cl-3}, we then have that $M$ is isomorphic to $\C_2$ or $\C_4$ (these are the only cyclic $2$-groups which are realizable in characteristics 0 or $2^k$).\end{proof}

\subsection{Almost cyclic $2$-groups} \label{ac2g}
We determine all realizable almost cyclic $2$-groups; i.e., $2$-groups which have a cyclic subgroup of index $2$.  We begin with the abelian case. First, observe that any abelian almost cyclic $2$ group must be either $\C_{2^r}$ or $\C_{2^r} \times \C_2$ for some $r$. (This can be seen directly or from the classification of finite abelian groups.)  The indecomposable case is handled in \cite{cl-3}.

\begin{theorem}[\cite{cl-3}] \label{abelian-1}
The group $\C_{2^r}$ is realizable if and only if\, $\C_{2^r}$ is $\C_2$, $\C_4$, $\C_8$, $\C_{q-1}$ where $q$ is a Fermat prime.
\end{theorem}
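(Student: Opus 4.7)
For the \emph{if} direction, the groups in the list are realized by explicit rings: $\C_2 = \Z^\times = \F_3^\times$, $\C_4 = \Z[i]^\times = \F_5^\times$, $\C_8 = \F_9^\times$, and $\C_{q-1} = \F_q^\times$ for any Fermat prime $q$. For the \emph{only if} direction, assume $R^\times \cong \C_{2^r}$. If $\mathrm{char}(R) = 0$, then for $r \geq 3$ the unique involution $-1 \in R^\times$ is the fourth power of a generator, so $-1$ has a fourth root in $R$; the proposition in Section \ref{char0sec} then forces $\mathrm{char}(R) > 0$, a contradiction. Hence in characteristic zero $r \leq 2$, giving $\C_2$ or $\C_4$.

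For positive characteristic, Proposition \ref{wlogRfinite} lets me assume $R$ is finite and commutative, and Theorem \ref{2fchar} writes $\mathrm{char}(R) = 2^a p_1 \cdots p_k$ with distinct Fermat primes $p_i$. The Chinese Remainder Theorem splits $R$ into factors of prime-power characteristic; because $\C_{2^r}$ is indecomposable and every odd-Fermat-prime factor has nontrivial unit group, only one prime-power factor can contribute nontrivially. Hence either $\mathrm{char}(R) = p$ for a Fermat prime $p$, or $\mathrm{char}(R) = 2^a$. In the first case, Proposition \ref{J0} yields $R \cong \F_{p^k}$ with $p^k - 1 = 2^r$; Mihailescu's theorem (the Catalan conjecture) then forces either $k = 1$, so $p = 2^r + 1$ is a Fermat prime giving $\C_{q-1}$, or $(p, k, r) = (3, 2, 3)$, giving $\C_8$.

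It remains to bound $r$ when $\mathrm{char}(R) = 2^a$. If $a \geq 3$, then $\Z_{2^a}^\times \cong \C_2 \times \C_{2^{a-2}}$ embeds as a noncyclic subgroup of $R^\times$, contradicting cyclicity. If $a = 1$ and $r \geq 3$, Theorem \ref{rankprop}(\ref{rankpropeven}) produces a noncyclic abelian subgroup of $R^\times$, again impossible; so $r \leq 2$. The remaining case, $a = 2$, is the main obstacle. In characteristic $4$, the identity $(1 + 2x)^2 = 1 + 4x + 4x^2 = 1$ shows that $1 + 2R$ is elementary abelian, so, inside the cyclic group $R^\times$, it has order at most $2$. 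Since $2R$ is nilpotent and hence contained in $\mathrm{Jac}(R)$, the map $R^\times \twoheadrightarrow (R/2R)^\times$ is surjective, so $R/2R$ has cyclic unit group of order at least $2^{r-1}$; applying the characteristic-$2$ bound just obtained gives $r \leq 3$.

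To eliminate $r = 3$ in characteristic $4$, one writes a generator $g$ of $\C_8$ as $g = 1 + a$ with $a \in \mathrm{Jac}(R)$ (valid because $(R/\mathrm{Jac}(R))^\times$ has odd order in characteristic $2$ while $g$ has $2$-power order), and expands $g^4 = -1$ to obtain the relation $a^4 + 2a^2 = 2$. The subcases $a^2 = 0$ and $a^4 = 0$ with $a^2 \neq 0$ are immediate: the former gives $0 = 2$, and the latter yields $a^2 = 1 + z$ with $2z = 0$, so that $a^4 = (1 + z)^2 = 1 + z^2 = 0$ forces $z^2 = -1$, contradicting $2z^2 = 2(z)(z) = 0$ in characteristic $4$. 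The delicate step, which I expect to be the main obstacle, is ruling out the higher nilpotency indices of $a$ (automatically capped by $a^8 = 0$, which follows from $a^4 = 2 - 2a^2$); one approach is to classify the finite local $\Z_4$-algebras of order $16$ with residue field $\F_2$ and verify directly that none has a cyclic unit group of order $8$.
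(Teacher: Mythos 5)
The paper does not actually prove this theorem; it imports it from \cite{cl-3}, so the relevant comparison is with the proof there. Your overall strategy (split by characteristic via Theorem \ref{2fchar}, use the fourth-root-of-$-1$ proposition in characteristic $0$, Proposition \ref{J0} plus Mih\u{a}ilescu in Fermat-prime characteristic, $\Z_{2^a}^\times$ noncyclic for $a\geq 3$, Theorem \ref{rankprop} in characteristic $2$, and reduction mod $2R$ in characteristic $4$) is sound and gets you almost all the way: it correctly yields $\C_2,\C_4$ in characteristics $0$ and $2$, $\C_{q-1}$ and $\C_8$ in Fermat-prime characteristic, and the bound $r\leq 3$ in characteristic $4$. (Two small wrinkles you should smooth: the characteristic could also be $2p$ with the characteristic-$2$ factor having trivial unit group, which does not change the conclusion but should be said; and the justification that $g=1+a$ with $a$ in the radical should go through ``$(R/J)^\times$ is a quotient of the $2$-group $R^\times$, while each Wedderburn factor $\GL_{n_i}(\F_{2^{k_i}})$ contains elements of odd order unless $n_i=k_i=1$,'' rather than the claim that $(R/J)^\times$ has odd order.)

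The genuine gap is the one you flag yourself: the case $\mathrm{char}(R)=4$, $R^\times\cong\C_8$, $g=1+a$ with $a^4\neq 0$ is not ruled out, only deferred to an unexecuted classification of local $\Z_4$-algebras of order $16$. This case cannot be waved away: the relation $a^4=2-2a^2=2(1-a)(1+a)$ shows $a^4=2u$ for a unit $u$, and the ring $\Z_4[x]/(2x,\,x^4-2)$ really is a characteristic-$4$ local ring of order $32$ in which $1+x$ has multiplicative order $8$ and $x^4=2\neq 0$, so nontrivial rings satisfying your surviving relations exist and the contradiction must come from the unit group being \emph{exactly} $\C_8$. One way to close the gap along your own lines: since $1+a^4=1+2u$ squares to $1$, if $a^4\neq 0$ then $1+a^4=g^4=-1$, forcing $a^4=2$ and $2a^2=0$; then $1+a^2$ is a unit with $(1+a^2)^2=1+a^4=-1$, so $1+a^2=(1+a)^{\pm 2}$, and comparing with $(1+a)^2=1+2a+a^2$ forces $2a=0$ (the other sign gives $2\cdot(\text{unit})=0$). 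Now $\Z_4[a]$ is a local quotient of $\Z_4[x]/(2x,x^4-2)$ with residue field $\F_2$ and unit group $\C_8$, hence has order $16$, i.e.\ is the quotient by an ideal of order $2$; but the only order-$2$ ideal of $\Z_4[x]/(2x,x^4-2)$ is $(2)=(x^4)$, and killing it destroys characteristic $4$. Until some such argument is supplied, the ``only if'' direction is incomplete precisely where it is hardest.
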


\n It was also proved in \cite{cl-3} that the only realizable indecomposable abelian $p$-groups for $p$ an odd prime are the groups $\C_p$ for $p$ Mersenne. For finite abelian $p$-groups, this fact also follows from Ditor's work.

Now we consider the the groups $\C_{2^r} \times \C_2$. The following proposition shows that all these groups are realizable.

\begin{proposition}\label{abelian-2}
If $H$ is an abelian group, then the group $\C_2 \times H$ is realizable. 
\end{proposition}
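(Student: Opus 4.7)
The plan is to obtain this realizability in one stroke by invoking the matrix construction $M(R,H)$ of Proposition \ref{matrixtrick} with a carefully chosen base ring. Given an arbitrary abelian group $H$, the most natural candidate for $R$ is the ring of integers $\Z$, since $H$ is automatically a $\Z$-module, $\Z$ is commutative, and $\Z^\times = \C_2$ already supplies the desired $\C_2$ factor.

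More precisely, I would set $R = \Z$ and regard $H$ as a symmetric $\Z$-$\Z$-bimodule (the two actions coincide because $\Z$ is central in any ring, so $rh = hr$ holds trivially for all $r \in \Z$ and $h \in H$). Then Proposition \ref{matrixtrick}(\ref{isring}) asserts that $M(\Z,H)$ is a unital ring, and part (\ref{communitgroup}) of the same proposition computes its unit group as
\[ M(\Z, H)^\times \;\cong\; \Z^\times \times (H,+) \;\cong\; \C_2 \times H. \]
This directly exhibits $\C_2 \times H$ as the group of units of a ring, completing the proof. As a bonus, part (\ref{samechar}) shows that $M(\Z,H)$ has characteristic $0$, so the realization takes place in characteristic zero.

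There is essentially no obstacle here: every hypothesis of Proposition \ref{matrixtrick}(\ref{communitgroup}) is satisfied automatically once one views $H$ as a $\Z$-module, and no finiteness assumption on $H$ is required. The entire content of the proposition is a direct application of the matrix trick, with $\Z$ in the role of the base ring playing the role of supplier of the $\C_2$ summand.
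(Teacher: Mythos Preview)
Your proof is correct and follows exactly the same approach as the paper: the paper's proof simply notes that $H$ is a $\Z$-module with $\Z^\times \cong \C_2$ and invokes Proposition~\ref{matrixtrick}. Your version merely spells out the application of part~(\ref{communitgroup}) in more detail.
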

\begin{proof}
An abelian group $H$ is a $\Z$-module and $\Z^\times \cong \C_2$. Now apply Proposition \ref{matrixtrick}.
\end{proof}

Before turning our attention to nonabelian almost cyclic $2$-groups, we first remind the reader that, in the odd-primary case, there are no realizable nonabelian $p$-groups. This follows from the work of Ditor; however, we include a short proof here for convenience.

\begin{proposition}
If $p$ is odd, then every realizable finite $p$-group is abelian.
\end{proposition}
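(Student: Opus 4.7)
The plan is to reduce the problem to the characteristic~$2$ case and then apply Proposition~\ref{J0}, which forces any finite ring of characteristic~$2$ whose unit group has odd order to be a product of finite fields, and hence to have an abelian unit group.

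The first step is to pin down the characteristic. Suppose $R$ is any (not necessarily finite) ring with $R^\times \cong G$. Since $p$ is odd, $|G|$ is odd, so $G$ contains no element of order~$2$. The element $-1 \in R$ is a unit with $(-1)^2 = 1$, so the order of $-1$ in $R^\times$ divides~$2$; by the parity observation it must equal~$1$. Hence $-1 = 1$ in $R$, which says exactly that $R$ has characteristic~$2$. In particular, $G$ is realizable in a positive characteristic, so Proposition~\ref{wlogRfinite} lets us replace $R$ with a finite ring of characteristic~$2$ whose group of units is $G$.

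The second step is to invoke Proposition~\ref{J0}. Because $|R^\times| = |G|$ is odd, we have $\gcd(|R^\times|, 2) = 1$, so the hypothesis of Proposition~\ref{J0} is satisfied. That proposition gives
\[
R \cong \prod_{i=1}^{n} \F_{2^{k_i}}, \qquad R^\times \cong \prod_{i=1}^{n} \C_{2^{k_i} - 1},
\]
and the right-hand side is a direct product of cyclic groups, hence abelian. Therefore $G$ is abelian, as claimed.

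There is no real obstacle: the only nontrivial input is Proposition~\ref{J0}, whose proof (using the Jacobson radical, Artin--Wedderburn, and Wedderburn's little theorem) is already recorded in the preliminaries. The conceptual step is simply the observation that an odd-order unit group in any ring forces characteristic~$2$, after which everything is handed to us by the structure theorem for finite semisimple rings.
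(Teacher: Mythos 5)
Your proof is correct and follows essentially the same route as the paper: deduce characteristic $2$ from the absence of order-$2$ units, reduce to a finite ring, and apply Proposition~\ref{J0} to conclude the unit group is a product of cyclic groups. If anything, your ordering is slightly more careful than the paper's, since you establish positive characteristic before invoking Proposition~\ref{wlogRfinite}, which requires it.
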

\begin{proof}
Let $G$ be a finite $p$-group that is realizable by a ring $R$.  Without loss of generality, we can assume that $R$ is finite because $G$  is finite.  The ring $R$ must have characteristic $2$, for otherwise $-1$ would be a unit of order 2 which is impossible when $p$ is odd. We now have $\gcd(|R^\times|, \mathrm{char}(R)) = 1$, so Proposition \ref{J0} implies that $R^\times =  \prod \F_{2^{k_i}}^\times$ is an abelian group.
\end{proof}

We now consider nonabelian almost cyclic $2$-groups of order $2^n$. The only nonabelian groups of order 8, $\Dih_8$ and $\Quat_8$, are both almost cyclic.  It is well known that $\Dih_8$ is realizable in characteristic 2; for example, $(\U_3(\F_2))^\times = \Dih_8$, where $\U_3(\F_2)$ is the ring of upper-triangular $3 \times 3$ matrices with entries in $\F_2$. It is also well known that $\Quat_8$  is realizable. Consider the ring of Lipschitz integers defined by 
\[ L = \{ a + bi + cj + dk  \, | \, a, b, c, d \in \Z \} \]
where $i, j$ and $k$ satisfy the relations $i^2 = j^2 = k^2 = -1$ and $ij = k, jk =i,  ki =j$.  It can be shown that a Lipschitz integer $a + bi + cj + dk$ is a unit if and only if its norm $a^2 + b^2 +c^2 +d^2 = 1$. This shows that that the units in this ring are $\{ \pm 1, \pm i, \pm j, \pm k \}$ with the above relations. This group is isomorphic to $\Quat_8$. Combining this with what we already know about the realizability of groups of odd order, we have the following proposition.

\begin{proposition}
The following is a complete list of realizable groups of order $p^3$ for some prime $p$: $\C_4 \times \C_2$, $\C_8$, $\Dih_8$, $\Quat_8$, and $\C_p^3$ for $p = 2$ or $p$ a Mersenne prime.
\end{proposition}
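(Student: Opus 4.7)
The plan is a case analysis by the prime $p$, using the classification of groups of order $p^3$ and assembling the already-established results in the paper. Since every group of order $p^3$ lies in one of finitely many isomorphism classes, this reduces to checking each class against what is already known.

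First I would handle $p = 2$. There are exactly five groups of order $8$: the abelian groups $\C_8$, $\C_4 \times \C_2$, $\C_2^3$, and the nonabelian groups $\Dih_8$ and $\Quat_8$. All five appear on the claimed list, so only realizability needs to be verified. Theorem \ref{abelian-1} handles $\C_8$ (e.g.\ $\F_9^\times$); Proposition \ref{abelian-2} with $H = \C_4$ handles $\C_4 \times \C_2$, and with $H = \C_2 \times \C_2$ handles $\C_2^3$ (alternatively, $(\Z \times \Z \times \Z)^\times \cong \C_2^3$); and $\Dih_8$, $\Quat_8$ are realized by $\upt_3(\F_2)$ and by the Lipschitz integers respectively, as discussed just before the proposition.

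Next I would handle odd $p$. There are again five groups of order $p^3$: the three abelian groups $\C_{p^3}$, $\C_{p^2} \times \C_p$, $\C_p^3$, and two nonabelian extraspecial groups (of exponent $p$ and of exponent $p^2$). The two nonabelian cases are dispatched at once by the immediately preceding proposition, which asserts that every realizable finite $p$-group with $p$ odd is abelian. For the abelian cases, I would invoke Ditor's theorem (quoted in the introduction): a realizable finite abelian $p$-group with $p$ odd must be elementary abelian with $p$ Mersenne. This rules out $\C_{p^3}$ and $\C_{p^2} \times \C_p$ in every odd characteristic, and admits $\C_p^3$ precisely when $p = 2^k - 1$ is Mersenne, in which case $\F_{2^k} \times \F_{2^k} \times \F_{2^k}$ has unit group $\C_{p}^3$.

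Taken together these two cases exhaust the groups of order $p^3$ and match the stated list exactly, so the proof is complete. There is no substantive obstacle: the work is pure bookkeeping on top of the classification of groups of order $p^3$ together with Theorem \ref{abelian-1}, Proposition \ref{abelian-2}, Ditor's theorem, and the preceding proposition on odd-primary $p$-groups. The only place to take slight care is verifying that the standard classification of groups of order $p^3$ (for both $p = 2$ and $p$ odd) has been applied correctly so that no isomorphism class is missed.
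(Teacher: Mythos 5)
Your proof is correct and takes essentially the same route as the paper, which states this proposition as a direct consequence of the preceding results (Theorem \ref{abelian-1}, Proposition \ref{abelian-2}, the realizations of $\Dih_8$ and $\Quat_8$, the fact that realizable odd-order $p$-groups are abelian, and Ditor's theorem). Your explicit enumeration of the five isomorphism classes of groups of order $p^3$ in each case is exactly the bookkeeping the paper leaves implicit.
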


It remains to consider nonabelian almost cyclic $2$-groups of order at least $16$. All these groups have a cyclic maximal abelian subgroup of order at least $8$, so none of these groups are realizable according to Proposition \ref{main-nonabelian} (\ref{main-nonabelian-cm}).  This completes the proof of Theorem \ref{maintheorem} (\ref{maintheorem-ac}). Note further that this implies that the converse of Theorem \ref{main-nonabelian} (\ref{main-nonabelian-z}) is not true: for example, $\Quat_{16}$ is a finite nonabelian 2-group with $Z(\Quat_{16}) \cong \C_2$, but $\Quat_{16}$ is not realizable.

\subsection{Groups with periodic cohomology.}

A finite group $G$ is said to have periodic mod-$p$ cohomology if there is a cohomology class $\eta$ in $H^d(G, \mathbb{F}_p)$ such that multiplication by $\eta$ gives an isomorphism (for all $i > 0$)
\[ H^i(G, \mathbb{F}_p) \cong H^{i+d}(G, \mathbb{F}_p). \]
It is well known that a finite group $G$ has periodic mod-$p$ cohomology if and only the Sylow $p$-subgroup of $G$ is either a cyclic group or a generalized quaternion group. Since we now have a solution to Fuchs' problem for both cyclic and generalized quaternion groups, we have proved Theorem \ref{maintheorem} (\ref{maintheorem-pc}).

\end{document}